\newtheorem{theorem}{Theorem}
\newtheorem{Rem}{Remark}
\newtheorem{sats}{Theorem}
\newtheorem{prop}{Proposition}
\newtheorem{lem}{Lemma}
\newtheorem{kor}{Corollary}
\newcommand{\banm}{\begin{anm}}
\newcommand{\eanm}{\end{anm}}
\newcommand{\const}{{\rm const}}
\begin{document}

\title
{Oblique derivative problem \\
for  non-divergence parabolic equations\\
with discontinuous in time coefficients}
\author
{Vladimir Kozlov\footnote{Department of Mathematics, University of Link\"oping, SE-581 83 
Link\"oping, Sweden} \ and Alexander Nazarov\footnote{St.-Petersburg Department of Steklov 
Mathematical Institute, Fontanka, 27, St.-Petersburg, 191023, Russia, and  
St.-Petersburg State University, Universitetskii pr. 28, St.-Petersburg, 198504, Russia}
}

\date{}
\maketitle
\begin{abstract} 
\noindent We consider an oblique derivative problem for non-divergence parabolic
equations with discontinuous in $t$ coefficients in a half-space. We obtain
weighted coercive estimates of solutions in
anisotropic So\-bo\-lev spaces. We also give an application of this result to
linear 
 parabolic equations in a bounded domain. In particular, if the boundary is of class
${\cal C}^{1,\delta}$, $\delta\in (0,1]$, then we present a coercive estimate of solutions
in weighted anisotropic Sobolev spaces, where the weight is a power of the
distance to the boundary.
\end{abstract}

\section{Introduction}

Consider the parabolic equation
\begin{equation}\label{Jan1}
({\cal L}_0u)(x,t)\equiv\partial_tu(x,t)-
a^{ij}(t)D_iD_ju(x,t)=f(x,t)
\end{equation}
for $x\in\mathbb R^n$ and $t\in\mathbb R$.  Here and elsewhere $D_i$
denotes the operator of differentiation with respect to $x_i$ and
$\partial_tu$ is the derivative of $u$ with respect to $t$.

The only assumptions about the coefficients in (\ref{Jan1}) is that $a^{ij}$ are measurable real
valued functions of $t$ satisfying $a^{ij}=a^{ji}$ and
\begin{equation}\label{Jan2}
\nu|\xi|^2\le a^{ij}\xi_i\xi_j\le \nu^{-1}|\xi|^2, \qquad
\xi\in{\mathbb R}^n, \quad \nu=\const>0.
\end{equation}
It was proved by Krylov \cite{Kr2,Kr} that for $f\in L_{p,q}(\mathbb R^{n}\times\mathbb R)$ with
$1<p,q<\infty$, equation (\ref{Jan1}) in $\mathbb R^n\times\mathbb R$ has a unique solution such
that $\partial_tu$ and $D_iD_ju$ belong to $L_{p,q}(\mathbb R^n\times\mathbb R)$ and
\begin{equation}\label{Jan4}
\|\partial_tu\|_{p,q}+\sum_{ij} \|D_iD_ju\|_{p,q}\leq
C\|f\|_{p,q}\;.
\end{equation}
Here $L_{p,q}(\Omega\times I)=L_q(I\to L_p(\Omega))$ is the space of functions on
$\Omega\times I$  with finite norm
\begin{equation*}
\|f\|_{p,q}=\Big (\int\limits_I\Big (\int\limits_{\Omega}
|f(x,t)|^pdx\Big )^{\frac qp}dt\Big )^{\frac 1q}
\end{equation*}
(with natural change in the case $p=\infty$ or $q=\infty$).

In the authors' paper \cite{KN}  estimate (\ref{Jan4}) was
supplemented by a similar one in the space
$\widetilde{L}_{p,q}(\mathbb R^{n}\times\mathbb R)$
\begin{equation*}
|\!|\!|\partial_tu|\!|\!|_{p,q}+\sum_{ij}
|\!|\!|D_iD_ju|\!|\!|_{p,q}\leq C|\!|\!|f|\!|\!|_{p,q}.
\end{equation*}
Here $\widetilde{L}_{p,q}(\Omega\times I)=L_p(\Omega\to L_q(I))$ is the space of functions on
$\Omega\times I$  with finite norm
\begin{equation*}
|\!|\!|f|\!|\!|_{p,q}=\Big (\int\limits_\Omega\Big
(\int\limits_I |f(x,t)|^qdt\Big )^{\frac pq}dx\Big )^{\frac 1p}.
\end{equation*}
(with natural change in the case $p=\infty$ or $q=\infty$).
This space arises naturally in the theory of quasilinear non-divergence parabolic equations
(see \cite{Na}). Note that for $p=q$ we have
$$
\widetilde{L}_{p,p}(\Omega\times I)=L_{p,p}(\Omega\times I)=L_p(\Omega\times I);\qquad
|\!|\!|f|\!|\!|_{p,p}=\|f\|_{p,p}=\|f\|_p.
$$

The homogeneous Dirichlet problem for (\ref{Jan1}) in $\mathbb R^n_+\times\mathbb R$, where
$\mathbb R_+^{n}$ the half-space $\{x=(x',x_n)\in\mathbb R^n:x_n>0\}$, was considered
in \cite{Kr2,KN}. It was proved that its solution satisfies the
following weighted coercive estimate
\begin{equation}\label{Jan5}
\|x_n^\mu\partial_tu\|_{p,q}+\sum_{ij} \|x_n^\mu D_iD_ju\|_{p,q}\leq
C\|x_n^\mu f\|_{p,q}\;,
\end{equation}
where $1<p,q<\infty$ and $\mu\in\,(-\frac 1p,2-\frac 1p)$ (in \cite{Kr2}
this estimate was proved only for $\mu\in\,(1-\frac 1p,2-\frac 1p)$).
An analog of estimate (\ref{Jan5}), where the norm $\|\cdot \|_{p,q}$ is replaced by
$|\!|\!|\cdot|\!|\!|_{p,q}$, is also proved in \cite{KN}.\medskip

In the paper \cite{KN1} the homogeneous Dirichlet problem for (\ref{Jan1}) in cones and wedges
was considered, and coercive estimates for solutions were obtained in the scales of weighted 
$L_{p,q}$ and $\widetilde{L}_{p,q}$ spaces, where the weight is a power of the distance to the
vertex (edge).\medskip

Let us turn to  the oblique derivative problem  in the half-space
$\mathbb R^n_+$. Now equation (\ref{Jan1}) is satisfied for $x_n>0$ and
$\frac{\partial u}{\partial \gamma}=0$ for $x_n=0$. Here $\gamma$ is a constant
vector field with $\gamma_n>0$.

By changing the spatial variables one can reduce the boundary condition to the case
\begin{equation}\label{OD1}
D_nu=0 \qquad\mbox{for}\quad x_n=0.
\end{equation}

One of the main results of this paper is the proof of estimate
(\ref{Jan5}) and its analog for the norm
$|\!|\!|\cdot|\!|\!|_{p,q}$, for solutions of the oblique derivative
problem  (\ref{Jan1}), (\ref{OD1}) with arbitrary $p,q\in(1,\infty)$
and for $\mu$ satisfying
\begin{equation*}
-\frac 1p<\mu<1-\frac 1p.
\end{equation*}
In the case of time independent coefficients such estimates for the Neumann problem were
proved in \cite{Na}.

We use an approach based on the study of the Green functions. In Section \ref{Dir} we collect
(partially known) results on the estimate of the Green function of the Dirichlet problem for
equation (\ref{Jan1}). Section \ref{Neu} is devoted to the estimates of the Green function
of problem (\ref{Jan1}), (\ref{OD1}).

In  Section \ref{solv} we apply the obtained estimates to the oblique derivative problem for linear
non-divergence parabolic equations with discontinuous in time coefficients in
cylinders $\Omega\times (0,T)$, where $\Omega$ is a bounded domain
in $\mathbb  R^n$. We prove solvability results in weighted $L_{p,q}$ and $\widetilde
 L_{p,q}$ spaces, where the weight is a power of the distance to the boundary of $\Omega$.
The smoothness of the boundary is characterized by smoothness of
local isomorphisms in neighborhoods of boundary points, which
flatten the boundary. In particular, if the boundary is of the class
${\cal C}^{1,\delta}$ with $\delta\in (0,1]$, then for solutions to
the equation (\ref{Jan1})\footnote{Here the coefficients $a^{ij}$
may depend on $x$ (namely, we assume $a^{ij}\in {\cal C}(\Omega\to L_\infty (0,T))$).}
 in $\Omega\times (0,T)$ with zero initial and boundary conditions
the following coercive estimate is proved in
Theorem \ref{anisotropic} (see Remark \ref{Rem1}):
\begin{eqnarray*}
&&\|(\widehat d(x))^\mu\partial_tu\|_{p,q}+\sum_{ij} \|(\widehat
d(x))^\mu D_iD_ju\|_{p,q}\leq
C\|(\widehat d(x))^\mu f\|_{p,q},\\
&&|\!|\!|(\widehat d(x))^\mu\partial_tu|\!|\!|_{p,q}+\sum_{ij}
|\!|\!|(\widehat d(x))^\mu D_iD_ju|\!|\!|_{p,q}\leq
C|\!|\!|(\widehat d(x))^\mu f|\!|\!|_{p,q},
\end{eqnarray*}
where $\mu$, $p$, $q$ and $\delta$ satisfy $1<p,q<\infty$, \
$1-\delta-\frac{1}{p}<\mu<1-\frac{1}{p}$. \medskip

Let us recall some notation: $x=(x_1,\dots,x_n)=(x',x_n)$ is
a point in $\mathbb R^n$; 
$Du=(D_1u, \dots, D_nu)$ is the gradient of $u$.

We denote
\begin{equation*}
Q_R(x^0,t^0)=\{
(x,t): |x-x^0|<R,\; 0<t^0-t<R^2\};
\end{equation*}
\begin{equation*}
Q_R^+(x^0,t^0)=\{ (x,t): |x-x^0|<R,\; x_n>0,\; 0<t^0-t<R^2\}.
\end{equation*}
The last notation will be used only for $x^0\in\overline{\mathbb R^n_+}$.\medskip

Set
\begin{equation*}
{\cal R}_x=\frac{x_n}{x_n+\sqrt{t-s}},\qquad {\cal
R}_y=\frac{y_n}{y_n+\sqrt{t-s}}.
\end{equation*}

In what follows we denote by the same letter the kernel  and the
corresponding integral operator, i.e.
\begin{equation*}
({\cal K}h)(x,t)=\int\limits_{-\infty}^t\int\limits_{\mathbb R^n}
{\cal K}(x,y;t,s)h(y,s)\,dyds.
 \end{equation*}
Here we expand functions $\cal K$ and $h$ by zero to whole space-time if necessary.\medskip

We adopt the convention regarding summation from $1$ to $n$
with respect to repeated indices.  We use the letter $C$ to denote
various positive constants. To indicate that $C$ depends on some
parameter $a$, we sometimes write $C_a$.

\section{Preliminary results}\label{Dir}
\subsection{The estimates in the whole space and in the half-space
under the Dirichlet boundary condition}\label{Rn}

Let us consider  equation (\ref{Jan1}) in the whole space $\mathbb
R^n$. Using the Fourier transform with respect to  $x$ one can
obtain the following representation of solution through the
right-hand side:
\begin{equation}\label{TTN1}
u(x,t)=\int\limits_{-\infty}^t\int\limits_{{\mathbb R}^n}
\Gamma(x,y;t,s) f(y,s)\ dy\,ds,
\end{equation}
where $\Gamma$ is the Green function of the operator ${\cal L}_0$
 given by
\begin{equation*}
\Gamma(x,y;t,s)= \frac { \det\big(\int_s^t A(\tau)d\tau\big)^{-\frac
12}} {(4\pi)^{\frac n2}}    \exp \bigg(-\frac {\Big(\big(\int_s^t
A(\tau) d\tau\big)^{-1} (x-y),(x-y)\Big)}4\bigg)
\end{equation*}
for $t>s$ and $0$ otherwise.  Here  $A(t)$ is  the matrix $\{
a^{ij}(t)\}_{i,j=1}^n$. The above representation implies, in
particular, the following evident estimates.

\begin{prop}\label{Pr1} Let $\alpha$ and $\beta$ be two arbitrary multi-indices. Then
\begin{equation*}
|D_x^\alpha D_y^\beta \Gamma(x,y;t,s)|\le C\,(t-s)^{-\frac
{n+|\alpha|+|\beta|}2} \,\exp\left(-\frac{\sigma|x-y|^2}{t-s}\right)
\end{equation*}
and
\begin{equation*}
|\partial_sD_x^\alpha D_y^\beta \Gamma(x,y;t,s)|\le C\,(t-s)^{-\frac
{n+|\alpha|+|\beta|}2-1}
\,\exp\left(-\frac{\sigma|x-y|^2}{t-s}\right)
\end{equation*}
for $x,y\in\mathbb R^n$ and $s<t$. Here $\sigma$ depends only on the
ellipticity constant $\nu$ and $C$ may depend on $\nu$, $\alpha$ and $\beta$.
\end{prop}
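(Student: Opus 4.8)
The plan is to exploit the explicit formula for $\Gamma$ together with the ellipticity bounds \eqref{Jan2}. First I would observe that, by \eqref{Jan2}, the averaged matrix $B(t,s):=\int_s^t A(\tau)\,d\tau$ satisfies $\nu(t-s)\,I\le B(t,s)\le\nu^{-1}(t-s)\,I$ in the sense of quadratic forms. Hence its eigenvalues lie in $[\nu(t-s),\nu^{-1}(t-s)]$, which gives two immediate consequences: the determinant obeys $\big(\nu(t-s)\big)^{n}\le\det B(t,s)\le\big(\nu^{-1}(t-s)\big)^{n}$, so the prefactor $\det B(t,s)^{-1/2}$ is comparable to $(t-s)^{-n/2}$; and the inverse satisfies $\big(B(t,s)^{-1}z,z\big)\ge \nu\,(t-s)^{-1}|z|^{2}$ for all $z\in\mathbb R^n$. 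Plugging $z=x-y$ into the exponent yields the basic bound
\begin{equation*}
\Gamma(x,y;t,s)\le C(t-s)^{-\frac n2}\exp\Big(-\frac{\nu|x-y|^2}{4(t-s)}\Big),
\end{equation*}
which is the case $\alpha=\beta=0$ with $\sigma=\nu/4$.

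For the derivatives I would differentiate the Gaussian directly. Each application of $D_{x_i}$ (or $D_{y_i}$, which only changes a sign) brings down a factor of the form $\tfrac12\big(B(t,s)^{-1}(x-y)\big)_i$, whose modulus is bounded by $C(t-s)^{-1}|x-y|$ because $\|B(t,s)^{-1}\|\le\nu^{-1}(t-s)^{-1}$. Thus $D_x^\alpha D_y^\beta\Gamma$ equals $\Gamma$ times a polynomial in the entries of $B(t,s)^{-1}(x-y)$ of degree $|\alpha|+|\beta|$, and the $k$-th order terms are dominated by $C(t-s)^{-k}|x-y|^{k}$. Using the elementary inequality $r^{k}e^{-cr^{2}}\le C_{k,c,c'}e^{-c'r^{2}}$ for any $0<c'<c$ — applied with $r=|x-y|/\sqrt{t-s}$ — I absorb each extra power $\big(|x-y|/\sqrt{t-s}\big)^{k}$ into the exponential at the cost of slightly shrinking $\sigma$, and I am left with exactly the claimed factor $(t-s)^{-(n+|\alpha|+|\beta|)/2}$ and a new exponent $\sigma|x-y|^{2}/(t-s)$ with $\sigma$ depending only on $\nu$.

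For the $\partial_s$ estimate I would use that $\partial_s B(t,s)=-A(s)$, together with $\partial_s\det B=\det B\cdot\mathrm{tr}\big(B^{-1}\partial_s B\big)$ and $\partial_s(B^{-1})=-B^{-1}(\partial_s B)B^{-1}$. Since $\|A(s)\|\le\nu^{-1}$ and $\|B(t,s)^{-1}\|\le\nu^{-1}(t-s)^{-1}$, differentiating the prefactor produces an extra factor bounded by $C(t-s)^{-1}$, and differentiating the exponent produces an extra factor bounded by $C(t-s)^{-2}|x-y|^{2}=C(t-s)^{-1}\cdot\big(|x-y|/\sqrt{t-s}\big)^{2}$; in both cases, after differentiating further by $D_x^\alpha D_y^\beta$, the same $r^{k}e^{-cr^{2}}$ absorption argument gives the asserted bound with the extra $(t-s)^{-1}$. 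The only genuinely delicate point is bookkeeping the combinatorial structure of the polynomial factors generated by the higher-order derivatives and checking uniformity of all constants in $t-s$; once the scaling $B(t,s)\sim(t-s)I$ is in hand, everything reduces to the single scalar lemma $r^{k}e^{-cr^{2}}\le C e^{-c'r^{2}}$, so no real obstacle remains. (Alternatively, one may rescale $x-y=\sqrt{t-s}\,\eta$ at the outset, reducing all estimates to uniform bounds on derivatives of $\det M^{-1/2}\exp(-\tfrac14(M^{-1}\eta,\eta))$ over the compact set of symmetric matrices $M$ with $\nu I\le M\le\nu^{-1}I$, which is perhaps the cleanest route.)
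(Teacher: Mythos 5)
Your argument is correct and is exactly what the paper has in mind: the paper offers no written proof, simply asserting that the explicit representation of $\Gamma$ "implies... the following evident estimates," and your computation (two-sided bounds $\nu(t-s)I\le\int_s^tA(\tau)\,d\tau\le\nu^{-1}(t-s)I$ from (\ref{Jan2}), absorption of the polynomial factors into the Gaussian at the cost of shrinking $\sigma$, and $\partial_s\int_s^tA(\tau)\,d\tau=-A(s)$ for the time derivative) is the standard way of making that evident. The rescaling $x-y=\sqrt{t-s}\,\eta$ you mention at the end is indeed the cleanest way to organize the bookkeeping; the only caveat worth a word is that since $A$ is merely measurable, the $\partial_s$ identity holds for a.e.\ $s$, which suffices for the stated estimate.
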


In the next proposition we present solvability results for equation
(\ref{Jan1}) in the whole space.

\begin{prop}\label{space}
Let $p, q\in\,(1,\infty)$.

{\rm (i)} If $f\in L_{p,q}(\mathbb R^n\times\mathbb R)$, then
the solution of equation {\rm (\ref{Jan1})} given by {\rm (\ref{TTN1})} satisfies
\begin{equation*}
\|\partial_t u\|_{p,q}+\sum_{ij}\| D_iD_ju\|_{p,q}\le C\ \|f\|_{p,q},
\end{equation*}
where $C$ depends only on $\nu$, $p$, $q$.\smallskip

{\rm (ii)} If $f\in \widetilde L_{p,q}(\mathbb R^n\times\mathbb R)$, then
the solution of equation {\rm (\ref{Jan1})} given by {\rm (\ref{TTN1})} satisfies
\begin{equation}\label{est_space2}
|\!|\!|\partial_t u|\!|\!|_{p,q}+\sum_{ij}|\!|\!|
D_iD_ju|\!|\!|_{p,q}\le C\ |\!|\!| f|\!|\!|_{p,q},
\end{equation}
where $C$ depends only on $\nu$, $p$, $q$.
\end{prop}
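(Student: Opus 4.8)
The plan is to deduce both statements from Krylov's $L_{p,q}$ result together with the Calder\'on--Zygmund-type machinery adapted to the parabolic metric, with part (ii) following from part (i) by a one-variable mixed-norm trick. For part (i), the operator $f\mapsto\partial_t u$ and $f\mapsto D_iD_j u$ are, via representation (\ref{TTN1}) and Proposition \ref{Pr1}, singular integral operators whose kernels $K(x,y;t,s)=\partial_t\Gamma$ or $D_iD_j\Gamma$ satisfy the pointwise bounds $|K(x,y;t,s)|\le C(t-s)^{-\frac{n}{2}-1}\exp(-\sigma|x-y|^2/(t-s))$ and the analogous bounds for $D_{x,y}K$ and $\partial_s K$ coming from the second estimate in Proposition \ref{Pr1}. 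These bounds yield the standard size and H\"older (smoothness) estimates with respect to the parabolic distance $\rho((x,t),(y,s))=\max\{|x-y|,|t-s|^{1/2}\}$, so the kernels are parabolic Calder\'on--Zygmund kernels. Since Krylov's theorem gives $L_2(\mathbb R^n\times\mathbb R)=L_{2,2}$ boundedness (take $p=q=2$ in (\ref{Jan4})), the Calder\'on--Zygmund theorem on the space of homogeneous type $(\mathbb R^{n+1},\rho,dx\,dt)$ gives $L_r$ boundedness for all $r\in(1,\infty)$ and weak type $(1,1)$; then the off-diagonal/vector-valued extension of the Calder\'on--Zygmund theorem, or equivalently a Fubini-type argument combined with the scalar weak-$(1,1)$ bound and interpolation, upgrades this to the mixed norm $L_{p,q}=L_q(\mathbb R\to L_p(\mathbb R^n))$ for all $p,q\in(1,\infty)$. (Alternatively one simply cites \cite{Kr2,Kr,KN} for part (i), since (\ref{Jan4}) is quoted verbatim in the introduction; but the point is that the kernel estimates are what make the next step work.)

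For part (ii), I would exploit the special structure of the coefficients, namely that $a^{ij}$ depends on $t$ only. The key observation is that the frozen-coefficient operator acts diagonally enough in the $t$-variable that one can run a partial Fourier/rearrangement argument: fix the spatial point and regard $u(x,\cdot)$ and $f(x,\cdot)$ as functions of $t$. The cleanest route is to prove the weak-type endpoint and the $L_\infty$-BMO-type endpoint for the operator acting in the $t$-variable with values in $L_p(\mathbb R^n)$, i.e. to view $\mathcal K$ as a vector-valued singular integral in $t$ alone with kernel $t\mapsto K(\cdot,\cdot;t,s)$ taking values in the bounded operators on $L_p(\mathbb R^n_x)$. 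The $L_p(\mathbb R^n_x)$-operator-norm bounds on this $t$-kernel and its $t$-derivative follow from Proposition \ref{Pr1} by integrating the Gaussian in $y$ and applying Young's inequality in $x$; concretely, $\|K(\cdot,\cdot;t,s)\|_{L_p\to L_p}\lesssim (t-s)^{-1}$ and $\|\partial_t K\|_{L_p\to L_p}\lesssim (t-s)^{-2}$, which are exactly the $1$-dimensional Calder\'on--Zygmund conditions in $t$. Combined with part (i) at exponent $q=p$ (which gives $L_p(\mathbb R\to L_p(\mathbb R^n_x))$ boundedness, serving as the "$L^2$" input of the vector-valued theory) and the scalar $L_p(\mathbb R^n_x)$ bound just described, the vector-valued Calder\'on--Zygmund theorem in the single variable $t$ yields boundedness on $L_q(\mathbb R\to L_p(\mathbb R^n_x))$; but $L_q(\mathbb R\to L_p(\mathbb R^n_x))$ is precisely $\widetilde L_{p,q}$ after interchanging — wait, one must be careful: $\widetilde L_{p,q}=L_p(\mathbb R^n\to L_q(\mathbb R))$, so instead I would run the vector-valued argument in the $x$-variable, treating the operator as acting on $L_q(\mathbb R_t)$-valued functions of $x$, using that for $a^{ij}=a^{ij}(t)$ the operator $f\mapsto D_iD_j u$ commutes with spatial translations and that its "symbol in $x$" is a bounded operator on $L_q(\mathbb R_t)$ uniformly — this is where (\ref{est_space2}) with $p=2$, i.e. $|\!|\!|\cdot|\!|\!|_{2,q}$, is needed as the base case, and that base case is itself obtained from the one-dimensional-in-$x$ Fourier multiplier structure together with Krylov's $L_q$-in-time estimate applied frequency by frequency.

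The main obstacle is the last point: getting the base case $|\!|\!|\cdot|\!|\!|_{2,q}$ (or some Hilbert-space-valued $L_q$-in-$t$ estimate) off the ground, because the coefficients are merely measurable in $t$, so one cannot take a Fourier transform in $t$ and must instead quote Krylov's scalar-in-time $L_q$ estimate (\ref{Jan4}) in a Hilbert-space-valued form — which is legitimate since Krylov's proof of (\ref{Jan4}) for $p=q$ is really an $L^q(\mathbb R_t)$ estimate that tensors with $L^2(\mathbb R^n_x)$ via Plancherel in $x$. Once that Hilbert-valued base case is in hand, the parabolic Calder\'on--Zygmund theory in the spatial variable $x$ (with the $L_q(\mathbb R_t)$-operator-norm kernel bounds derived from Proposition \ref{Pr1}) closes the argument, and the dependence of $C$ on only $\nu,p,q$ is transparent since every constant entering is either Krylov's constant or a Calder\'on--Zygmund constant depending on dimension and the $\sigma,C$ of Proposition \ref{Pr1}, which in turn depend only on $\nu$.
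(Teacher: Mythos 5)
First, a point of comparison: the paper does not actually prove Proposition \ref{space} --- part (i) is quoted from \cite{Kr2,Kr} and part (ii) from \cite{KN} --- so any self-contained argument is by definition a different route. Your overall architecture is the standard one for such mixed-norm bounds: establish a diagonal case and then run a vector-valued Calder\'on--Zygmund argument in the remaining variable, using the Gaussian bounds of Proposition \ref{Pr1} to verify the operator-valued size and H\"ormander conditions. For part (i) this works (one-dimensional singular integral in $t$ with values in ${\cal L}(L_p(\mathbb R^n_x))$, kernel norms of order $(t-s)^{-1}$ and $(t-s)^{-2}$, base case $q=p$), and in any event you are entitled to cite Krylov there, as the paper does.

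The genuine gap is in the base case you propose for part (ii). You view the operator as a singular integral in $x$ with values in ${\cal L}(L_q(\mathbb R_t))$ and take as base case the $\widetilde L_{2,q}=L_2(\mathbb R^n_x\to L_q(\mathbb R_t))$ bound, claimed to follow ``frequency by frequency'' via Plancherel in $x$. This step fails for $q\ne 2$: Plancherel does not hold for $L_2(\mathbb R^n;B)$ unless $B$ is isomorphic to a Hilbert space, and uniform boundedness on $L_q(\mathbb R_t)$ of the operator-valued symbol $m(\xi)\colon g\mapsto -\xi_i\xi_j\int_{-\infty}^{t}\exp\bigl(-\int_s^tA(\tau)\xi\cdot\xi\,d\tau\bigr)g(s)\,ds$ does not imply boundedness of the corresponding multiplier on $L_2(\mathbb R^n;L_q(\mathbb R_t))$; for operator-valued multipliers with a UMD-valued target one needs $R$-boundedness of the symbol family (Weis' multiplier theorem), which you neither state nor verify. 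The repair is simple and makes the $p=2$ case unnecessary: take as base case the diagonal $p=q$, i.e. $\widetilde L_{q,q}=L_q(\mathbb R^{n+1})$, which is exactly Krylov's scalar estimate. Then the Benedek--Calder\'on--Panzone theorem applied in the $x$-variable, with the kernel $x\mapsto D_iD_j\Gamma(x,y;\cdot,\cdot)$ regarded as ${\cal L}(L_q(\mathbb R_t))$-valued (Schur's test together with Proposition \ref{Pr1} gives operator-norm bounds $C|x-y|^{-n}$ for the kernel and $C|x-y|^{-n-1}$ for its $x$- and $y$-gradients), yields boundedness on $L_p(\mathbb R^n\to L_q(\mathbb R))=\widetilde L_{p,q}$ for all $p\in(1,\infty)$ via the weak $(1,1)$ estimate, interpolation and duality. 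With that substitution your argument closes; as written, the $\widetilde L_{2,q}$ step is not valid.
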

The first assertion is proved in \cite{Kr2} and the second one in \cite{KN}.\bigskip

We denote  by $\Gamma^{\cal D}(x,y;t,s)$ the Green function of the
operator ${\cal L}_0$ in the half-space ${\mathbb R}^n_+$ subject to
the homogeneous Dirichlet boundary condition on the boundary $x_n=0$.

The next statement is proved in \cite[Theorem 3.6]{KN}.
\begin{prop}\label{T2}
For $x,y\in\mathbb R_+^n$ and $ t>s$ the following estimate is
valid:
\begin{equation}\label{May3}
|D_{x}^{\alpha}D_{y}^{\beta} \Gamma^{\cal D}(x,y;t,s)|\le
C\,\frac{{\cal R}_x^{2-\alpha_n-\varepsilon} {\cal
R}_y^{2-\beta_n-\varepsilon}}{(t-s)^{\frac {n+|\alpha|+|\beta|}2}}
\,\exp \left(-\frac{\sigma|x-y|^2}{t-s}\right),
\end{equation}
where $\sigma$ is a positive number dependent on $\nu$ and $n$,
$\varepsilon$ is an arbitrary small positive number  and $C$ may
depend on $\nu$, $\alpha$, $\beta$ and $\varepsilon$. If
$\alpha_n\le 1$ {\rm(}or $\beta_n\le 1${\rm)} then
$2-\alpha_n-\varepsilon$ {\rm(}$2-\beta_n-\varepsilon${\rm)} must be
replaced by $1-\alpha_n$ {\rm(}$1-\beta_n${\rm)} respectively in the
corresponding exponents.
\end{prop}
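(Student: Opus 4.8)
The plan is to derive the estimate for $\Gamma^{\cal D}$ from the unweighted Gaussian bounds of Proposition \ref{Pr1} by the method of images combined with a careful analysis of the cancellation near the boundary. First I would recall that, because the coefficients $a^{ij}$ depend only on $t$, the half-space Green function admits the explicit representation
\begin{equation*}
\Gamma^{\cal D}(x,y;t,s)=\Gamma(x,y;t,s)-\Gamma(x,y^*;t,s),
\end{equation*}
where $y^*=(y',-y_n)$ is the reflection of $y$ across $\{x_n=0\}$; this is immediate from the fact that $\Gamma(\cdot,y^*;t,s)$ solves ${\cal L}_0$-equation in $\mathbb R^n_+$ and agrees with $\Gamma(\cdot,y;t,s)$ on $x_n=0$ by symmetry of the Gaussian. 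Differentiating, $D_x^\alpha D_y^\beta\Gamma^{\cal D}(x,y;t,s)=D_x^\alpha D_y^\beta\Gamma(x,y;t,s)-(-1)^{\beta_n}D_x^\alpha D_y^\beta\Gamma(x,y^*;t,s)$, so everything reduces to estimating a difference of two explicit Gaussian-type kernels.

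The heart of the matter is to extract the factors ${\cal R}_x^{1-\alpha_n}$ and ${\cal R}_y^{1-\beta_n}$ (respectively ${\cal R}_x^{2-\alpha_n-\varepsilon}$ etc.). For the factor in $x$: when $x_n\gtrsim\sqrt{t-s}$ one has ${\cal R}_x\asymp 1$, so the claimed bound is weaker than Proposition \ref{Pr1} and there is nothing to do; the interesting regime is $x_n\lesssim\sqrt{t-s}$, where one must gain a power of $x_n/\sqrt{t-s}$ from the difference. When $\alpha_n\le 1$ this power of $x_n$ comes from the telescoping identity
\begin{equation*}
D_x^\alpha\Gamma(x,y;t,s)-(-1)^{\beta_n}D_x^\alpha\Gamma(x,y^*;t,s)
=\int\limits_{-y_n}^{y_n}\partial_{z_n}\Big(D_x^\alpha\Gamma(x,(y',z_n);t,s)\Big)\,dz_n
\end{equation*}
(using $D_x^\alpha$ commutes with the reflection up to the sign), which supplies one extra $y_n$-derivative, hence one extra factor $(t-s)^{-1/2}$ and, after integrating the Gaussian over the segment of length $2y_n$, a factor $y_n(t-s)^{-1/2}\asymp{\cal R}_y$ up to adjusting $\sigma$; an analogous one-variable Taylor/telescoping argument in $x_n$ around $x_n=0$ produces the factor ${\cal R}_x$. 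The borderline cases $\alpha_n\le1$ or $\beta_n\le1$ already giving the full $1-\alpha_n$ (resp. $1-\beta_n$) power are exactly these. To upgrade the exponent from $1-\alpha_n$ to $2-\alpha_n-\varepsilon$ when $\alpha_n\ge 2$ (and similarly in $y$) one iterates: the difference of second-order Taylor remainders gives a factor $x_n^2(t-s)^{-1}\cdot|\log|$-type loss near $x_n\asymp\sqrt{t-s}$, which is absorbed by the $\varepsilon$; concretely one splits according to whether $x_n\le(t-s)^{1/2}$ or not and, in the former range, uses the bound $\big|\partial_{z_n}^2 D_x^{\alpha'}\Gamma\big|\lesssim(t-s)^{-(n+|\alpha|)/2-1}\exp(-\sigma|x-y|^2/(t-s))$ twice, trading one power of decay for the regularization exponent $\varepsilon$ so that $({\cal R}_x)^{-\varepsilon}$ stays bounded where needed. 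For the mixed estimate one applies the $x$-argument and the $y$-argument successively, noting they act on independent variables.

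The step I expect to be the main obstacle is obtaining the clean power $2-\alpha_n-\varepsilon$ rather than just $1-\alpha_n$ when $\alpha_n\ge2$: a naive second Taylor expansion in $x_n$ only formally gains $x_n^2$, but the second $x_n$-derivative of the Gaussian is no longer monotone and the crude Gaussian bound loses a logarithm at the scale $x_n\asymp\sqrt{t-s}$; handling this requires the sacrificial $\varepsilon$ and a somewhat delicate case split, and one must check that the constant $\sigma$ can be chosen uniformly (shrinking it a fixed amount at each differentiation/telescoping step, which is harmless since finitely many steps occur for fixed $\alpha,\beta$). Since the statement is quoted from \cite[Theorem 3.6]{KN}, in the paper itself this is simply cited; the sketch above indicates how one would reconstruct it. The remaining verifications — that $\int_s^t A(\tau)\,d\tau$ is comparable to $(t-s)I$ by (\ref{Jan2}) so that all Gaussians have controlled exponents, and that differentiation under the integral sign is legitimate — are routine.
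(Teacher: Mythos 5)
The paper does not actually prove Proposition \ref{T2}: it is quoted verbatim from \cite[Theorem 3.6]{KN}. Your attempted reconstruction, however, fails at its very first step. The identity $\Gamma^{\cal D}(x,y;t,s)=\Gamma(x,y;t,s)-\Gamma(x,y^*;t,s)$ with $y^*=(y',-y_n)$ is false for a general symmetric matrix $A(t)$ satisfying (\ref{Jan2}). Indeed, the exponent in $\Gamma$ is the quadratic form $\frac14\bigl(B(x-y),(x-y)\bigr)$ with $B=\bigl(\int_s^tA(\tau)\,d\tau\bigr)^{-1}$, and on $x_n=0$ replacing $y_n$ by $-y_n$ flips the sign of the cross terms $2\sum_{j<n}B^{jn}(x_j-y_j)(x_n-y_n)$; hence $\Gamma(x,y;t,s)\ne\Gamma(x,y^*;t,s)$ on the boundary whenever some $a^{jn}$, $j<n$, is nonzero, and your candidate kernel does not satisfy the Dirichlet condition. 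The standard repair --- an oblique, $A$-orthogonal reflection --- is available only for time-independent coefficients: here the relevant metric $\int_s^tA(\tau)\,d\tau$ varies with $(t,s)$, so the reflection would have to depend on $(t,s)$, and the reflected kernel would then no longer solve the equation in $(x,t)$. This is exactly the obstruction that makes the discontinuous-in-time setting nontrivial, and it is why \cite{KN} derives (\ref{May3}) by an entirely different route (local boundary estimates for solutions vanishing on $x_n=0$, applied iteratively in the $x$- and $y$-variables) rather than by images.

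Two further points. Even in the special case $a^{jn}\equiv 0$ for $j<n$, where the reflection is legitimate, your explanation of the exponent $2-\alpha_n-\varepsilon$ misreads the statement: the reflected kernel is odd in $x_n$ about the boundary, so Taylor expansion yields clean integer gains (in fact better than the stated exponents for $\alpha_n\ge2$) with no logarithm and no $\varepsilon$. The $\varepsilon$ in Proposition \ref{T2}, and the fact that the exponent degrades to $2-\alpha_n-\varepsilon\le-\varepsilon$ for $\alpha_n\ge 2$, are not Taylor artifacts; they encode the limited, H\"older-type boundary regularity of $D_n\Gamma^{\cal D}$ forced by merely measurable $a^{ij}(t)$, which a reflection argument cannot produce and which your scheme does not address. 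Finally, your telescoping identity inserts the sign $(-1)^{\beta_n}$ into a formula in which no $y$-derivatives have been applied, and the stated integral does not equal the left-hand side as written. In short, the proposal does not prove the proposition in the generality in which the paper uses it.
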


Since $(\partial_s+a_{ij}(-s)D_{y_i}D_{y_j})\Gamma^{\cal D}(x,y;t,s)=0$ for $s<t$,
we obtain
\begin{kor}\label{kor1}
For $x,y\in\mathbb R^n_+$ and $t>s$
\begin{equation}\label{May33}
|D_{x}^{\alpha}D_{y}^{\beta}\partial_s \Gamma^{\cal D}(x,y;t,s)|\le
C\,\frac{{\cal R}_x^{2-\alpha_n-\varepsilon}{\cal R}_y^{-\beta_n-\varepsilon}}
{(t-s)^{\frac {n+2+|\alpha|+|\beta|}2}} \,\exp \left(-\frac{\sigma|x-y|^2}{t-s}\right).
\end{equation}
If $\alpha_n\le 1$ then $2-\alpha_n-\varepsilon$ must be replaced by $1-\alpha_n$. 
\end{kor}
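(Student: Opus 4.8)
The plan is to differentiate the parabolic identity displayed just above the corollary and then invoke Proposition~\ref{T2}. Since $a^{ij}$ are independent of the spatial variables, applying $D_x^\alpha D_y^\beta$ to the relation $(\partial_s+a_{ij}(-s)D_{y_i}D_{y_j})\Gamma^{\cal D}=0$ gives, for $s<t$,
\begin{equation*}
D_x^\alpha D_y^\beta\partial_s\Gamma^{\cal D}(x,y;t,s)=-\sum_{i,j=1}^n a_{ij}(-s)\,D_x^\alpha D_y^{\beta+e_i+e_j}\Gamma^{\cal D}(x,y;t,s),
\end{equation*}
where $e_k$ denotes the $k$-th coordinate multi-index. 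From the ellipticity condition (\ref{Jan2}), tested with $\xi=e_i$ and $\xi=e_i\pm e_j$, one gets $|a_{ij}(\cdot)|\le C(\nu)$ for all $i,j$, so that
\begin{equation*}
|D_x^\alpha D_y^\beta\partial_s\Gamma^{\cal D}(x,y;t,s)|\le C\sum_{i,j=1}^n|D_x^\alpha D_y^{\beta+e_i+e_j}\Gamma^{\cal D}(x,y;t,s)|.
\end{equation*}

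Next I would estimate each summand on the right by Proposition~\ref{T2} with the multi-index $\alpha$ in $x$ and $\beta+e_i+e_j$ in $y$, choosing there the same $\varepsilon$ as required in the corollary. The factor in $x$ is $2-\alpha_n-\varepsilon$ (respectively $1-\alpha_n$ when $\alpha_n\le1$) in every summand, exactly as in (\ref{May33}); since $|\beta+e_i+e_j|=|\beta|+2$, the power of $t-s$ becomes $(t-s)^{-(n+|\alpha|+|\beta|+2)/2}$, again as claimed; and the Gaussian factor is reproduced after replacing $\sigma$ by the smallest of the finitely many constants occurring. It remains to control the factor in $y$, which for the $(i,j)$-th summand is ${\cal R}_y$ raised to $2-(\beta_n+(e_i)_n+(e_j)_n)-\varepsilon$, or to $1-(\beta_n+(e_i)_n+(e_j)_n)$ if that $n$-component does not exceed $1$.

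Because $0<{\cal R}_y\le1$, a larger exponent produces a smaller quantity, so the dominant summand is the one with the smallest exponent in ${\cal R}_y$. This is attained for $i=j=n$: then $(\beta+e_n+e_n)_n=\beta_n+2\ge2$ lies above the threshold of Proposition~\ref{T2}, so no replacement occurs and the exponent equals $-\beta_n-\varepsilon$, precisely the one in (\ref{May33}). For every other pair one has $(e_i)_n+(e_j)_n\le1$, hence $2-(\beta_n+(e_i)_n+(e_j)_n)-\varepsilon\ge1-\beta_n-\varepsilon>-\beta_n-\varepsilon$, and in the threshold case $1-(\beta_n+(e_i)_n+(e_j)_n)\ge-\beta_n>-\beta_n-\varepsilon$ as well; in all cases the corresponding power of ${\cal R}_y$ is $\le{\cal R}_y^{-\beta_n-\varepsilon}$. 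Summing the finitely many summands and absorbing the constants yields (\ref{May33}), together with the stated replacement when $\alpha_n\le1$.

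The argument is routine once Proposition~\ref{T2} is available; the only place that needs attention is this last bookkeeping step, namely verifying that the threshold-replacement clause of Proposition~\ref{T2}, whenever it is triggered for one of the lower-order summands, still produces an exponent in ${\cal R}_y$ not smaller than $-\beta_n-\varepsilon$, so that the single uniform factor ${\cal R}_y^{-\beta_n-\varepsilon}$ in (\ref{May33}) is legitimate.
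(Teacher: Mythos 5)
Your argument is exactly the one the paper intends: the corollary is stated as an immediate consequence of the identity $(\partial_s+a_{ij}(-s)D_{y_i}D_{y_j})\Gamma^{\cal D}=0$, which trades $\partial_s$ for two additional $y$-derivatives so that Proposition~\ref{T2} applies, and your exponent bookkeeping (the worst case $i=j=n$ giving ${\cal R}_y^{-\beta_n-\varepsilon}$, all other pairs and the threshold-replacement cases giving larger exponents and hence smaller powers of ${\cal R}_y\le 1$) correctly fills in the details the paper leaves implicit. The proof is correct and essentially identical to the paper's.
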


\subsection{Coercive estimates for weak solutions to the Dirichlet
problem in the half-space}

We formulate two auxiliary results on estimates of integral operators.
The first statement is a particular case $m=1$ of
\cite[Lemmas A.1 and A.3 and Remark A.2]{KN},
see also \cite[Lemmas 2.1 and 2.2]{Na}.

\begin{prop}\label{L_p}
Let $1\le p\le\infty$, $\sigma>0$, $0<r\le 2$, ${\lambda_1}+{\lambda_2}>-1$,
and let
\begin{equation}\label{mu_m}
-\frac 1p-\lambda_1<\mu<1-\frac 1p+\lambda_2.
\end{equation}
Suppose also that the kernel ${\cal T}(x,y;t,s)$ satisfies the inequality
\begin{equation*}
|{\cal T}(x,y;t,s)|\le C\,\frac {{\cal R}_x^{\lambda_1+r}
{\cal R}_y^{\lambda_2}}{(t-s)^{\frac {n+2-r}2}}\,
\frac{x_n^{\mu-r}}{y_n^{\mu}}\,\exp
\left(-\frac{\sigma|x-y|^2}{t-s} \right),
\end{equation*}
 for $t>s$. Then the integral operator ${\cal T}$ is bounded in
$L_p(\mathbb R^n\times\mathbb R)$ and in $\widetilde L_{p,\infty}(\mathbb R^n\times\mathbb R)$.
\end{prop}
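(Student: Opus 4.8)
The plan is to prove boundedness separately for the $L_p$ norm and for the $\widetilde L_{p,\infty}$ norm, reducing each to a one-dimensional weighted integral inequality of Hardy type in the variables $(x_n,y_n)$, after first integrating out the tangential variables $x',y'$ and the time variable $s$ by means of the Gaussian factor.

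\medskip

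\emph{Step 1: Reduction of the tangential and time variables.} Write the kernel as
$$
|{\cal T}(x,y;t,s)|\le C\,\frac{{\cal R}_x^{\lambda_1+r}{\cal R}_y^{\lambda_2}}{(t-s)^{\frac{n+2-r}{2}}}\,\frac{x_n^{\mu-r}}{y_n^{\mu}}\,\exp\!\left(-\frac{\sigma|x'-y'|^2}{t-s}\right)\exp\!\left(-\frac{\sigma|x_n-y_n|^2}{t-s}\right).
$$
For fixed $x_n,y_n,t,s$, Young's (or Minkowski's) inequality for the convolution in $x'$ against the Gaussian $\exp(-\sigma|x'-y'|^2/(t-s))$ contributes a factor $C(t-s)^{n/2}$ in any $L_p(\mathbb R^{n-1}_{x'})$ norm. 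This eliminates the $x'$-dependence and kills all but $(t-s)^{-1+r/2}$ of the temporal power (up to the remaining $x_n$-Gaussian). Then, since ${\cal R}_x,{\cal R}_y\le 1$ and $r\le 2$, one integrates in $s$ from $-\infty$ to $t$: the substitution $\tau=t-s$ and the elementary bound
$$
\int_0^\infty \tau^{-1+\frac r2}\,{\cal R}_x^{\lambda_1+r}{\cal R}_y^{\lambda_2}\,\exp\!\left(-\frac{\sigma|x_n-y_n|^2}{\tau}\right)\,\frac{d\tau}{\tau^{\,?}}
$$
--- here one must be slightly careful and keep ${\cal R}_x=x_n/(x_n+\sqrt\tau)$, ${\cal R}_y=y_n/(y_n+\sqrt\tau)$ under the integral --- produces, after splitting the range $\tau\lessgtr x_n^2$ and $\tau\lessgtr y_n^2$, a purely one-dimensional kernel $K(x_n,y_n)$ that is homogeneous of degree $-1$ in $(x_n,y_n)$, with the correct powers at $x_n/y_n\to0$ and $\to\infty$ dictated by $\lambda_1+r$ and $\lambda_2$.

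\medskip

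\emph{Step 2: The one-dimensional Hardy-type estimate.} After Step 1 one is left with showing that the operator
$$
(Sg)(x_n)=\int_0^\infty K(x_n,y_n)\,g(y_n)\,dy_n,\qquad K(x_n,y_n)=\frac1{x_n+y_n}\,\min\!\left(\frac{x_n}{y_n},1\right)^{\!a}\min\!\left(\frac{y_n}{x_n},1\right)^{\!b}
$$
(with $a,b>0$ appropriately chosen, $a$ governed by $\lambda_1$ and $b$ by $\lambda_2$) is bounded on $L_p((0,\infty),x_n^{p\mu-1}\,dx_n)$. By the Schur test with test function $x_n^{-\mu}$ one reduces this to the convergence of
$$
\int_0^\infty K(1,z)\,z^{-\mu-\frac1p}\,\frac{dz}{?}\quad\text{and the dual integral},
$$
which converge precisely when $-\tfrac1p-\lambda_1<\mu<1-\tfrac1p+\lambda_2$ --- this is where the hypothesis \eqref{mu_m} and the condition $\lambda_1+\lambda_2>-1$ (ensuring the two admissible intervals overlap, equivalently $K$ is integrable against a homogeneous weight) are used. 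For the $\widetilde L_{p,\infty}$ bound one performs the same Schur argument but first takes the $L_\infty$ norm in $t$ \emph{inside} the kernel estimate --- the kernel bound in the hypothesis is pointwise in $t$, so the temporal sup passes through harmlessly and one ends with the identical one-dimensional operator $S$ acting on $L_p$ in $x_n$ of the $t$-sup of $g$.

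\medskip

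\emph{Main obstacle.} The routine part is the Gaussian convolution in $x'$ and the Schur test; the delicate point is Step 1, namely carrying out the $s$-integration while retaining the factors ${\cal R}_x$ and ${\cal R}_y$, whose own dependence on $\tau=t-s$ must be tracked. One has to split the $\tau$-integral at the thresholds $\tau\sim x_n^2$ and $\tau\sim y_n^2$ (so that on each piece ${\cal R}_x$ and ${\cal R}_y$ are comparable either to $1$ or to $x_n/\sqrt\tau$, $y_n/\sqrt\tau$), and check that in every regime the resulting one-dimensional kernel has homogeneity exactly $-1$ with exponents not worse than $\lambda_1+r$ at one end and $\lambda_2$ at the other; the strict inequalities in \eqref{mu_m} are exactly what make each of the (finitely many) integrals converge. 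Since this is stated as essentially a special case of Lemmas A.1, A.3 and Remark A.2 of \cite{KN}, I would in fact quote those lemmas for the $\tau$-integration and only spell out the final Schur step; but the self-contained argument above is the one I would follow if proving it from scratch.
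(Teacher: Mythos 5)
The paper offers no proof of this proposition at all --- it is imported verbatim (as the case $m=1$) from Lemmas A.1, A.3 and Remark A.2 of \cite{KN} --- and your sketch reconstructs exactly the standard argument behind those lemmas: Young's inequality in the tangential and time variables, integration in $\tau=t-s$ split at the thresholds $\tau\sim x_n^2$ and $\tau\sim y_n^2$, reduction to a one-dimensional kernel homogeneous of degree $-1$, and a Schur test with a power weight, the strict inequalities on $\mu$ and the condition $\lambda_1+\lambda_2>-1$ being precisely what make the resulting integrals converge; the same reduction, with the $t$-supremum taken inside first, handles the $\widetilde L_{p,\infty}$ bound. The one flaw is bookkeeping in Step 1: the Gaussian convolution over $x'\in\mathbb R^{n-1}$ contributes $(t-s)^{(n-1)/2}$, not $(t-s)^{n/2}$, so the surviving temporal power is $(t-s)^{-(3-r)/2}$ rather than your $(t-s)^{-1+r/2}$; taken literally, your exponent would make the one-dimensional kernel homogeneous of degree $0$ instead of the degree $-1$ you correctly assert as the target. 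This is a fixable slip, not a gap in the method.
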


The next proposition is a particular case $m=1$ of \cite[Lemma A.4]{KN}, see also \cite[Lemma 3.2]{Na}.

\begin{prop}\label{L_p_1}
Let $1<p<\infty$, $\sigma>0$, $\varkappa>0$, $0\le r\le 2$,
${\lambda_1}+{\lambda_2}>-1$ and let $\mu$ be subject to {\rm
(\ref{mu_m})}. Also let the kernel ${\cal T}(x,y;t,s)$ satisfy
 the inequality
\begin{equation*}
|{\cal T}(x,y;t,s)|  \le C\,\frac {{\cal R}_x^{\lambda_1+r}
{\cal R}_{y}^{\lambda_2}}{(t-s)^{\frac {n+2-r}2}}
\,\frac{x_n^{\mu-r}}{y_n^{\mu}}\, \left(\frac {\delta}{t-s}\right)^{\varkappa}\!\!\,
\exp \left(-\frac{\sigma|x-y|^2}{t-s} \right),
\end{equation*}
for $t>s+\delta$. Then for any $s^0>0$ the norm of the operator
$${\cal T}\ :\ L_{p,1}(\mathbb R^n\times\ (s^0-\delta,s^0+\delta))\ \to \
L_{p,1}(\mathbb R^n\times\ (s^0+2\delta,\infty))$$ does not exceed
a constant $C$ independent of $\delta$ and $s^0$.
\end{prop}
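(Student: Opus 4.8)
The plan is to reduce the weighted $L_{p,1}$-boundedness (with the extra smallness factor $(\delta/(t-s))^\varkappa$ on the late-time tail) to the unweighted boundedness already recorded in Proposition \ref{L_p}. Write the kernel as
\[
{\cal T}(x,y;t,s)={\cal S}(x,y;t,s)\cdot\Big(\frac{\delta}{t-s}\Big)^{\varkappa},
\]
where ${\cal S}$ satisfies exactly the hypothesis of Proposition \ref{L_p}. Fix $s^0>0$, let $h$ be supported in $\mathbb R^n\times(s^0-\delta,s^0+\delta)$, and estimate $({\cal T}h)(x,t)$ for $t>s^0+2\delta$. Since the time integration then runs over $s\in(s^0-\delta,s^0+\delta)$ while $t>s^0+2\delta$, we have $t-s>\delta$ throughout, so the factor $(\delta/(t-s))^\varkappa$ is bounded by $1$; this already gives boundedness with a constant independent of $\delta$ and $s^0$ by Proposition \ref{L_p}. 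The point of the statement, however, is presumably to exploit the decay of this factor in later applications, so I would keep it: splitting the output time interval dyadically into $t-s^0\in(2^k\delta,2^{k+1}\delta)$, $k\ge 1$, on each such piece $(\delta/(t-s))^\varkappa\le C\,2^{-k\varkappa}$, and on each piece Proposition \ref{L_p} (applied to the truncated-in-time kernel, which still obeys the same pointwise bound) yields an $L_{p,1}$-in-$t$ norm over that slab controlled by $C\,2^{-k\varkappa}\|h\|_{L_{p,1}}$. Summing the geometric series in $k$ gives the claim.

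More precisely, the key steps are: (1) record that $L_{p,1}(\mathbb R^n\times I)=L_1(I\to L_p(\mathbb R^n))$, so that the operator norm we must bound is $\sup$ over normalized $h$ of $\int_{s^0+2\delta}^\infty\|({\cal T}h)(\cdot,t)\|_{L_p(\mathbb R^n)}\,dt$; (2) for fixed $t$, apply the spatial part of the Proposition \ref{L_p} estimate — i.e. treat $s$ as a parameter and use the $x$-to-$y$ mapping properties of the kernel $\frac{{\cal R}_x^{\lambda_1+r}{\cal R}_y^{\lambda_2}}{(t-s)^{(n+2-r)/2}}\frac{x_n^{\mu-r}}{y_n^\mu}\exp(-\sigma|x-y|^2/(t-s))$, whose $L_p(\mathbb R^n)\to L_p(\mathbb R^n)$ operator norm (with the weight $x_n^\mu$ built in) is $O((t-s)^{-1})$ by the scaling that underlies Proposition \ref{L_p}, under the condition \eqref{mu_m} on $\mu$; (3) combine with $\int\|h(\cdot,s)\|_{L_p}\,ds=\|h\|_{L_{p,1}}$ and the extra factor $(\delta/(t-s))^\varkappa$, then integrate in $t$ over $(s^0+2\delta,\infty)$: since $\int_{s^0+2\delta}^\infty (t-s)^{-1}(\delta/(t-s))^\varkappa\,dt$ with $s<s^0+\delta$ is finite and, after the substitution $t-s=\delta\tau$, equals $\int_{\tau_0}^\infty \tau^{-1-\varkappa}\,d\tau$ with $\tau_0\ge 1$, it is bounded by a constant depending only on $\varkappa$, independent of $\delta$ and $s^0$.

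The main obstacle is step (2): Proposition \ref{L_p} is stated as a boundedness result on the space-time space $L_p(\mathbb R^n\times\mathbb R)$, not as a ``for each fixed $t$, the $x$-to-$y$ convolution-type map has operator norm $O((t-s)^{-1})$'' statement. So I would either extract that fixed-time bound from the proof of Proposition \ref{L_p} (it is exactly the heart of that argument — a change of variables $x_n\mapsto x_n/\sqrt{t-s}$, $y_n\mapsto y_n/\sqrt{t-s}$ reducing to a Hardy-type inequality in the last variable plus Young's inequality in the tangential variables), or, alternatively, avoid isolating the fixed-time bound and instead argue directly: apply Proposition \ref{L_p} itself to the kernel ${\cal S}$ on the whole of $\mathbb R^n\times\mathbb R$ but with $h$ replaced by $h\cdot\mathbf 1_{\{s^0-\delta<s<s^0+\delta\}}$ and the output restricted to $t>s^0+2\delta$, using the dyadic-in-$t$ decomposition above so that on each slab the killed factor $(\delta/(t-s))^\varkappa$ contributes only the constant $2^{-k\varkappa}$ and Proposition \ref{L_p} supplies the rest; then sum in $k$. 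The second route is cleaner because it uses Proposition \ref{L_p} as a black box; I would adopt it, being careful that the restriction-in-time of the kernel does not spoil the pointwise bound (it does not — truncation only decreases $|{\cal S}|$) and that the constant from Proposition \ref{L_p} is uniform, which it is since it depends only on $p$, $\sigma$, $r$, $\lambda_1$, $\lambda_2$, $\mu$.
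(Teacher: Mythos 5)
First, a point of reference: the paper does not prove Proposition \ref{L_p_1} at all -- it is imported verbatim (case $m=1$) from \cite[Lemma A.4]{KN}, see also \cite[Lemma 3.2]{Na}. The proof there is exactly your ``route (a)'': rescale $x\mapsto x/\sqrt{t-s}$, $y\mapsto y/\sqrt{t-s}$ to see that for each fixed pair $t>s$ the spatial kernel has $L_p(\mathbb R^n)\to L_p(\mathbb R^n)$ operator norm $\le C(t-s)^{-1}$ under (\ref{mu_m}) and $\lambda_1+\lambda_2>-1$, then use $\|({\cal T}h)(\cdot,t)\|_p\le C\int (t-s)^{-1}(\delta/(t-s))^{\varkappa}\|h(\cdot,s)\|_p\,ds$ and integrate in $t$ over $(s^0+2\delta,\infty)$, where $t-s>\delta$ makes $\int_\delta^\infty \delta^\varkappa u^{-1-\varkappa}\,du=1/\varkappa$ finite uniformly in $\delta$ and $s^0$. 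Your steps (1)--(3) reproduce this correctly, including the crucial computation in (3).

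The problem is that you then explicitly discard this route and ``adopt'' route (b), and route (b) is broken. Proposition \ref{L_p} asserts boundedness of ${\cal S}$ in $L_p(\mathbb R^n\times\mathbb R)=L_{p,p}$ and in $\widetilde L_{p,\infty}$; it says nothing about $L_{p,1}\to L_{p,1}$, which is the norm at stake here (and is precisely what is needed as the second hypothesis of the Benedek--Calder\'on--Panzone type extrapolation \cite[Theorem 3.8]{BIN} used later -- if Proposition \ref{L_p} already gave $L_{p,1}$ bounds, Proposition \ref{L_p_1} would be pointless). Your opening remark that ``$(\delta/(t-s))^\varkappa\le 1$ already gives boundedness by Proposition \ref{L_p}'' conflates these spaces, and the dyadic slab argument cannot be repaired by H\"older: passing from the $L_{p,p}$ bound on the slab $t-s^0\in(2^k\delta,2^{k+1}\delta)$ to an $L_{p,1}$ bound costs a factor $(2^k\delta)^{1/p'}$ on the output side, while on the input side $\|h\|_{p,p}$ is not controlled by any multiple of $\|h\|_{p,1}$ at all (take $h$ concentrated on a short time interval); even ignoring that, the series would require $\varkappa>1/p'$, whereas $\varkappa>0$ is arbitrary and in the application (Lemma \ref{weak_2}) $\varkappa=\varepsilon/(1+\varepsilon)$ is small. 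So you must commit to route (a): extract the fixed-time operator bound from the scaling argument underlying Proposition \ref{L_p} (a Schur/Hardy-type estimate for the rescaled kernel, valid exactly under (\ref{mu_m}) and $\lambda_1+\lambda_2>-1$) and finish with the elementary time integration you already wrote down.
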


Now we consider the problem
\begin{equation}\label{2.6}
Lu=f_0+\mbox{div}\,({\bf f})\qquad\mbox{in}\quad \mathbb R^n_+\times\mathbb R
\end{equation}
(here ${\bf f}=(f_1,\ldots,f_n)$) with the boundary condition
\begin{equation}\label{2.7}
u=0\qquad\mbox{for}\quad x_n=0.
\end{equation}

\begin{sats}\label{prop11} Let $1<p,q<\infty$ and $\mu\in (-\frac 1p,1-\frac 1p)$.

\noindent {\rm (i)} Suppose that $x_n^{\mu+1}f_0,\,x_n^\mu {\bf f}\in
\widetilde{L}_{p,q}(\mathbb R^n_+\times\mathbb R)$. Then the function
\begin{equation}\label{2.8}
u(x,t)=\int\limits_{-\infty}^t\int\limits_{\mathbb R^n_+}\Big(\Gamma^{\cal D}(x,y;t,s)f_0(y)
-D_y\Gamma^{\cal D}(x,y;t,s)\cdot{\bf f}(y)\Big)\,dyds
\end{equation}
gives a weak solution of problem {\rm (\ref{2.6})}, {\rm (\ref{2.7})} and satisfies the estimate
\begin{equation}\label{2.9}
|\!|\!|x_n^\mu D u|\!|\!|_{p,q}+|\!|\!|x_n^{\mu-1} u|\!|\!|_{p,q}\leq
C(|\!|\!|x_n^{\mu+1}f_0|\!|\!|_{p,q}+|\!|\!|x_n^\mu {\bf f}|\!|\!|_{p,q}).
\end{equation}

\noindent {\rm (ii)} Suppose that $x_n^{\mu+1}f_0,\,x_n^\mu {\bf f}\in
L_{p,q}(\mathbb R^n_+\times\mathbb R)$. Then the function {\rm (\ref{2.8})}
gives a weak solution of problem {\rm (\ref{2.6})}, {\rm (\ref{2.7})} and satisfies the estimate
\begin{equation}\label{2.9a}
\|x_n^\mu D u\|_{p,q}+\|x_n^{\mu-1} u\|_{p,q}\leq C(\|x_n^{\mu+1}
f_0\|_{p,q}+\|x_n^\mu {\bf f}\|_{p,q}).
\end{equation}
\end{sats}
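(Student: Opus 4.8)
The plan is to read off from the representation (\ref{2.8}) the four integral operators
\[
f_0\mapsto x_n^{\mu}Du,\qquad {\bf f}\mapsto x_n^{\mu}Du,\qquad f_0\mapsto x_n^{\mu-1}u,\qquad {\bf f}\mapsto x_n^{\mu-1}u,
\]
and to prove (\ref{2.9}), (\ref{2.9a}) by bounding each of them separately. First I would move the weights onto the data, writing $f_0=y_n^{-\mu-1}\cdot(y_n^{\mu+1}f_0)$ and ${\bf f}=y_n^{-\mu}\cdot(y_n^{\mu}{\bf f})$; then the kernels to be examined are
\[
x_n^{\mu}D_x^\alpha\Gamma^{\cal D}\,y_n^{-\mu-1},\qquad x_n^{\mu}D_x^\alpha D_y^\beta\Gamma^{\cal D}\,y_n^{-\mu},\qquad x_n^{\mu-1}\Gamma^{\cal D}\,y_n^{-\mu-1},\qquad x_n^{\mu-1}D_y^\beta\Gamma^{\cal D}\,y_n^{-\mu},
\]
with $|\alpha|=|\beta|=1$, and the data lie in $\widetilde L_{p,q}(\mathbb R^n_+\times\mathbb R)$ in part (i) and in $L_{p,q}(\mathbb R^n_+\times\mathbb R)$ in part (ii).

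For the first, third and fourth of these kernels the total order of differentiation is low enough that, after integration in $y$, the power of $t-s$ is locally integrable. Here I would substitute the bound of Proposition \ref{T2}, using its sharpened form valid for $\alpha_n\le1$ and $\beta_n\le1$, and check that the resulting kernel has exactly the shape required in Proposition \ref{L_p}. A routine bookkeeping gives: for the two $f_0$-terms one takes the exponent in Proposition \ref{L_p} equal to $\mu+1$ (which lies in $(1-\tfrac1p,2-\tfrac1p)$ precisely because $\mu\in(-\tfrac1p,1-\tfrac1p)$), with $r=1$, resp.\ $r=2$, and $\lambda_1+\lambda_2=0$; for ${\bf f}\mapsto x_n^{\mu-1}u$ one takes exponent $\mu$, $r=1$, $\lambda_1=0$, $\lambda_2=1-\beta_n$. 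In every case the admissibility condition (\ref{mu_m}) collapses exactly to $-\tfrac1p<\mu<1-\tfrac1p$ — the term $f_0\mapsto x_n^{\mu-1}u$ being the one for which both endpoints are attained, which explains the stated range of $\mu$ — so Proposition \ref{L_p} gives boundedness of these three operators on $L_p=L_{p,p}$ and on $\widetilde L_{p,\infty}$.

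The remaining operator ${\bf f}\mapsto x_n^{\mu}Du$ has kernel $x_n^{\mu}D_x^\alpha D_y^\beta\Gamma^{\cal D}\,y_n^{-\mu}$, whose homogeneity in $t-s$ is $-(n+2)/2$: after integration in $y$ this is no longer locally integrable in $s$, so the operator is a genuine singular integral in the time variable and Proposition \ref{L_p} does not apply to it. For this term I would run the Calderón--Zygmund argument in $t$ with $L_p(\mathbb R^n)$-valued (resp.\ $L_\infty$-in-time-valued) kernels: its boundedness on $L_{p,p}$ and on $\widetilde L_{p,\infty}$ follows by a standard duality argument from the weighted coercive estimate (\ref{Jan5}) and its $|\!|\!|\cdot|\!|\!|_{p,q}$-analog for the Dirichlet problem proved in \cite{KN}, while the off-diagonal (Hörmander-type) decay is exactly the content of Proposition \ref{L_p_1}, the required time-regularity of $D_x^\alpha D_y^\beta\Gamma^{\cal D}$ being supplied by Corollary \ref{kor1} (which produces the factor $\delta/(t-s)$). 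The same ingredients — the $L_{p,p}$ and $\widetilde L_{p,\infty}$ bounds obtained so far together with the decay estimate of Proposition \ref{L_p_1} — then promote all four operators from $q=p$ (and $q=\infty$) to the whole range $q\in(1,\infty)$, yielding (\ref{2.9}) and (\ref{2.9a}); this interpolation/Calderón--Zygmund step, carried out as in \cite{KN} and \cite{Na}, together with the treatment of the singular term, is the main point of the proof, the rest being bookkeeping.

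It remains to check that the function (\ref{2.8}) is a weak solution of (\ref{2.6}), (\ref{2.7}). This I would do in the standard way: the kernel bounds just obtained, combined with $x_n^{\mu+1}f_0,\,x_n^{\mu}{\bf f}\in\widetilde L_{p,q}$ (resp.\ $L_{p,q}$), make the integral in (\ref{2.8}) and the formal differentiations under it absolutely convergent, and the weak identity then follows by integration by parts from the defining properties of $\Gamma^{\cal D}$ as the Green function of ${\cal L}_0$ in $\mathbb R^n_+$ with homogeneous Dirichlet data — in particular $\Gamma^{\cal D}$ vanishes on $y_n=0$, which annihilates the boundary terms coming from the divergence part of the right-hand side.
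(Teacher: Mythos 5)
Your reduction to the four weighted kernel operators, the use of Proposition \ref{L_p} for the three ``subcritical'' kernels, and the Calder\'on--Zygmund/duality machinery (Proposition \ref{L_p_1} plus the vector-valued singular-integral theorem) for passing from $q=p$ to general $q$ all match the paper. You are also right that the operator ${\bf f}\mapsto x_n^{\mu}Du$, with kernel $\frac{x_n^{\mu}}{y_n^{\mu}}D_xD_y\Gamma^{\cal D}$, is the critical one: it has homogeneity $(t-s)^{-(n+2)/2}$, i.e.\ $r=0$, which Proposition \ref{L_p} excludes. The problem is how you propose to obtain its base-case bounds on $L_{p,p}$ and $\widetilde L_{p,\infty}$.

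You claim these follow ``by a standard duality argument from the weighted coercive estimate (\ref{Jan5})'' and its $|\!|\!|\cdot|\!|\!|$-analogue. This does not work. Estimate (\ref{Jan5}) controls the operator $f\mapsto x_n^{\mu}D_xD_x\Gamma^{\cal D}\,x_n^{-\mu}f$ (both derivatives falling on the $x$-variable, non-divergence data), and its weighted adjoint is again an operator of $D_xD_x$-type for the backward problem --- it never produces the mixed kernel $D_xD_y\Gamma^{\cal D}$ with the weight $x_n^{\mu}/y_n^{\mu}$. The identity $D_xD_y\Gamma=-D_xD_x\Gamma$ holds only in the whole space by translation invariance; for $\Gamma^{\cal D}$ in the half-space the two kernels are genuinely different (indeed the admissible ranges of $\mu$ differ: $(-\frac1p,2-\frac1p)$ for (\ref{Jan5}) versus $(-\frac1p,1-\frac1p)$ here). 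Running the adjoint the honest way, one finds that the dual of the divergence-form gradient operator is again a divergence-form gradient operator with $\mu$ replaced by $-\mu$ and time reversed --- so the argument is circular. Consequently the crucial $q=p$ and $q=\infty$ bounds for this operator, on which your entire Calder\'on--Zygmund step rests, are not established.

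The paper closes exactly this hole by a different device, which your proposal is missing: it first proves the zero-order bound $|\!|\!|x_n^{\mu-1}u|\!|\!|_{p,q}\le C(\dots)$ via the kernels ${\cal K}_0,{\cal K}_1$ (as you do), and then obtains the gradient bound \emph{not} from a kernel estimate but by localizing the interior coercive estimate (\ref{est_space2}) on Whitney-type boxes $B_{\rho,\vartheta}(\xi)=\{|x'-\xi'|<\rho,\ \rho/\vartheta<x_n<\rho\}$ with cut-off functions, which yields
\begin{equation*}
|\!|\!|x_n^{\mu}Du|\!|\!|_{p,q}\le C\bigl(|\!|\!|x_n^{\mu-1}u|\!|\!|_{p,q}+|\!|\!|x_n^{\mu+1}f_0|\!|\!|_{p,q}+|\!|\!|x_n^{\mu}{\bf f}|\!|\!|_{p,q}\bigr)
\end{equation*}
after summing over a partition of unity. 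This gives (\ref{2.9}) for all $q$ at once, and in particular the $L_{p,p}$ boundedness of all four kernels ${\cal K}_0,\dots,{\cal K}_3$, which is then fed into the H\"ormander-condition argument (Lemma \ref{weak_2}, where the factor $(\delta/(t-s))^{\varkappa}$ with $\varkappa=\varepsilon/(1+\varepsilon)$ is produced by interpolating between (\ref{May3}) and (\ref{May33}) --- a small technical point you also gloss over, since the raw $\partial_s$-estimate alone gives $\lambda_1+\lambda_2<-1$) to obtain (\ref{2.9a}). You should either import this localization step or supply an independent proof of the base-case bound for the mixed-derivative kernel; as written, the proposal does not prove the theorem.
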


\begin{proof} First, function (\ref{2.8}) obviously solves
problem (\ref{2.6}), (\ref{2.7}) in the sence of distributions. Thus,
it is sufficient to prove estimates (\ref{2.9}), (\ref{2.9a}).

Put
$$
{\cal K}_0(x,y;t,s)=\frac{x_n^{\mu-1}}{y_n^{\mu+1}}\Gamma^{\cal D}(x,y;t,s);\qquad
{\cal K}_1(x,y;t,s)=\frac{x_n^{\mu-1}}{y_n^\mu}D_{y}\Gamma^{\cal D}(x,y;t,s);
$$
$$
{\cal K}_2(x,y;t,s)=\frac{x_n^{\mu}}{y_n^{\mu+1}} D_x\Gamma_{\cal D}(x,y;t,s);\quad
{\cal K}_3(x,y;t,s)=\frac{x_n^\mu}{y_n^\mu} D_xD_y\Gamma_{\cal D}(x,y;t,s).
$$
\smallskip

(i) By Proposition \ref{T2} the kernels ${\cal K}_0$ and ${\cal K}_1$ satisfy the conditions of
Proposition \ref{L_p} with $r=1$ and

with $\lambda_1=-1$, $\lambda_2=1$ and $\mu$ replaced by $\mu+1$ for the kernel ${\cal K}_0$;

with $\lambda_1=\lambda_2=0$ for the kernel ${\cal K}_1$, respectively.\smallskip

\noindent This implies that for $\mu\in (-\frac 1p,1-\frac 1p)$
\begin{equation}\label{2.5a}
\|x_n^{\mu-1} u\|_p\leq C(\|x_n^{\mu+1} f_0\|_p+\|x_n^\mu {\bf f}\|_p)
\end{equation}
and
\begin{equation}\label{2.5b}
|\!|\!|x_n^{\mu-1} u|\!|\!|_{p,\infty}\leq
C(|\!|\!|x_n^{\mu+1}f_0|\!|\!|_{p,\infty}+|\!|\!|x_n^\mu {\bf f}|\!|\!|_{p,\infty}).
\end{equation}

Interpolating (\ref{2.5a}) and (\ref{2.5b}) we arrive at
\begin{equation}\label{2.5azz}
|\!|\!|x_n^{\mu-1}u|\!|\!|_{p,q}\leq C(|\!|\!|x_n^{\mu+1}
f_0|\!|\!|_{p,q}+|\!|\!|x_n^\mu {\bf f}|\!|\!|_{p,q}),
\end{equation}
for $1<p\leq q<\infty$ and $\mu\in (-\frac 1p,1-\frac 1p)$. Now duality
argument gives (\ref{2.5azz}) for all $1<p,q<\infty$ and for the
same interval for $\mu$.

To estimate the first term in the left-hand side of (\ref{2.9}) we use local estimates. We put
\begin{equation*}
B_{\rho,\vartheta}(\xi)=\{ x\in\mathbb R^n\,:\,
|x'-\xi'|<\rho,\frac{\rho}{\vartheta}<x_n<\rho\}.
\end{equation*}
Localization of estimate (\ref{est_space2}) using an appropriate cut-off function,
which is equal to $1$ on $B_{\rho,2}$ and $0$ outside $B_{2\rho,8}$, gives
\begin{equation*}\label{2.11}
\int\limits_{B_{\rho,2}(\xi)}\Big (\int\limits_{\mathbb R}|Du|^qdt\Big)^{\frac pq}dx\leq
C\int\limits_{B_{2\rho,8}(\xi)}\Big (\int\limits_{\mathbb R}(|u|^q\rho^{-q}+
\rho^q|f_0|^q+|{\bf f}|^qdt\Big )^{\frac pq}dx.
\end{equation*}
Using a proper partition of unity in ${\mathbb R^n_+}$, we arrive at
\begin{multline*}
\int\limits_{\mathbb R^n_+}\bigg(\int\limits_{\mathbb R} |Du|^q dt\bigg)^{p/q}x_n^{\mu p} dx
\le C\bigg(\ \int\limits_{\mathbb R^n_+}\bigg(\int\limits_{\mathbb R}|u|^q\, dt\bigg)^{p/q}
x_n^{\mu p-p} dx \\
+ \int\limits_{\mathbb R^n_+}\bigg(\int\limits_{\mathbb R}|{\bf f}|^q\, dt\bigg)^{p/q}x_n^{\mu p} dx
+ \int\limits_{\mathbb R^n_+}\bigg(\int\limits_{\mathbb R}|f_0|^q\, dt\bigg)^{p/q}
x_n^{\mu p+p} dx\bigg).
\end{multline*}
This immediately implies (\ref{2.9}) with regard of (\ref{2.5azz}).\medskip


(ii) To deal with the scale $L_{p,q}$, we need the following lemma.

\begin{lem}\label{weak_2}
Let a function $h$ be supported in the layer $|s-s^0|\le\delta$ and
satisfy $\int h(y;s)\ ds\equiv 0$. Also let $p\in (1,\infty)$ and
$\mu\in (-\frac 1p,1-\frac 1p)$.
Then the operators ${\cal K}_j$, $j=0,1,2,3$, satisfy
$$
\int\limits_{|t- s^0|>2\delta}\Vert ({\cal K}_jh)(\cdot; t)\Vert_p\ dt\le C\,\Vert h\Vert_{p,1},
$$
where $C$ does not depend on $\delta$ and $ s^0$.
\end{lem}

\begin{proof}
By $\int h(y; s)\, ds\equiv 0$, we have
\begin{equation}\label{difference1}
({\cal K}_jh)(x;t)=\int\limits_{-\infty}^{t}\int\limits_{\mathbb R^n}
\Bigl({\cal K}_j(x,y;t, s)-{\cal K}_j(x,y;t,s^0)\Bigr)\, h(y; s)\, dy\, ds
\end{equation}
(we recall that all functions are assumed to be extended by zero).

We choose $\varepsilon>0$ such that
\begin{equation}\label{mueps}
-\frac 1p<\mu<1-\frac 1p-\varepsilon.
\end{equation}
For $| s- s^0|<\delta$ and $t- s^0>2\delta$, estimate (\ref{May33})
implies
\begin{eqnarray*}
&&\left|{\cal K}_j(x,y;t, s)-{\cal K}_j(x,y;t, s^0)\right|
\le\int\limits_{s^0}^s|\partial_\tau {\cal K}_j(x,y;t,\tau)|\,d\tau\\
&&\le C\,\frac {{\cal R}^{\ell_1}_{x} {\cal R}^{\ell_2-\varepsilon}_{y}}
{(t- s)^{\frac {n+2-r}2}}\, \frac {x_n^{\ell_3}}{y_n^{\ell_4}}\,
 \frac {\delta} {t- s}\, \exp \left(-\frac {\sigma|x-y|^2}{t- s}\right),
\end{eqnarray*}

with $r=2$, $\ell_1=1$, $\ell_2=0$, $\ell_3=\mu-1$, $\ell_4=\mu+1$ for the kernel ${\cal K}_0$;

with $r=1$, $\ell_1=1$, $\ell_2=-1$, $\ell_3=\mu-1$, $\ell_4=\mu$ for the kernel ${\cal K}_1$;

with $r=1$, $\ell_1=0$, $\ell_2=0$, $\ell_3=\mu$, $\ell_4=\mu+1$ for the kernel ${\cal K}_2$;

with $r=0$, $\ell_1=0$, $\ell_2=-1$, $\ell_3=\mu$, $\ell_4=\mu$ for the kernel ${\cal K}_3$.\smallskip

\noindent On the other hand, estimate (\ref{May3}) implies
\begin{equation*}
\left|{\cal K}_j(x,y;t, s)-{\cal K}_j(x,y;t, s^0)\right| \le C\,\frac
{{\cal R}^{\ell_1}_{x}{\cal R}^{\ell_2+1}_{y}}{(t-s)^{\frac {n+2-r}2}}
\, \frac {x_n^{\ell_3}}{y_n^{\ell_4}}\, \exp\left(-\frac{\sigma|x-y|^2}{t-s} \right).
\end{equation*}
Combination of these estimates gives
\begin{equation*}
\left|{\cal K}_j(x,y;t, s)-{\cal K}_j(x,y;t, s^0)\right|
\le \frac {C\delta^\varkappa\,{\cal R}^{\ell_1}_{x} {\cal R}^{\ell_2+1-\varepsilon}_{y}}
{(t- s)^{\frac {n+2-r}2+\varkappa}}\, \frac {x_n^{\ell_3}}{y_n^{\ell_4}}\,
\exp \left(-\frac {\sigma|x-y|^2}{t- s}\right),
\end{equation*}
where $\varkappa=\frac {\varepsilon}{1+\varepsilon}$.
Thus, the kernels in (\ref{difference1}) satisfy the assumptions of Proposition \ref{L_p_1}

with $\lambda_1=-1$, $\lambda_2=1-\varepsilon$ and $\mu$ replaced by $\mu+1$ for kernels
${\cal K}_0$ and ${\cal K}_2$;

with $\lambda_1=0$, $\lambda_2=-\varepsilon$ for kernels ${\cal K}_1$ and ${\cal K}_3$, respectively.\smallskip

\noindent Inequality (\ref{mueps}) becomes (\ref{mu_m}), and the Lemma follows.\end{proof}

\medskip

We continue the proof of the second statement of Theorem \ref{prop11}.
Estimate (\ref{2.9}) for $q=p$ provides boundedness of the
operators ${\cal K}_j$, $j=0,1,2,3$, in $L_p(\mathbb R^n\times \mathbb R)$, 
which gives the first condition in \cite[Theorem 3.8]{BIN}. Lemma \ref{weak_2} is equivalent to the
second condition in this theorem. Therefore, Theorem 3.8 \cite{BIN}
ensures that these operators are bounded in $L_{p,q}(\mathbb R^n\times \mathbb R)$ for any
$q\in\,(1,p)$. For $q\in\,(p,\infty)$ this statement follows by duality arguments.
This implies estimate (\ref{2.9a}).\end{proof}

\section{Oblique derivative problem}\label{Neu}

\subsection{The Green function}

\begin{sats} \label{Th2}
There exists a Green function 
$\Gamma^{\cal N}=\Gamma^{\cal N}(x,y;t,s)$ of problem
{\rm (\ref{Jan1})}, {\rm (\ref{OD1})} and for arbitrary
$x,y\in\mathbb R_+^n$ and $ t>s$ it satisfies the estimate
\begin{equation}\label{Ap1a}
|D^\alpha_xD^\beta_y\Gamma^{\cal N}(x,y;t,s)|\leq C\frac{{\cal
R}_x^{\widehat{\alpha}_n}{\cal
R}_y^{\widehat{\beta}_n}}{(t-s)^{\frac {n+|\alpha|+|\beta|}2}}\,\exp
\left(-\frac{\sigma|x-y|^2}{t-s}\right),
\end{equation}
\begin{equation}\label{May34}
|D_{x}^{\alpha}D_{y}^{\beta}\partial_s \Gamma^{\cal N}(x,y;t,s)|\le
C\,\frac{{\cal R}_x^{\widehat\alpha_n}{\cal R}_y^{-1-\beta_n-\varepsilon}}
{(t-s)^{\frac {n+2+|\alpha|+|\beta|}2}} \,\exp \left(-\frac{\sigma|x-y|^2}{t-s}\right),
\end{equation}
where 
$$
\widehat{\alpha}_n=\begin{cases}
0 , & \alpha_n=0;\\ 
2-\alpha_n, & \alpha_n=1,2;\\
3-\alpha_n-\varepsilon, & \alpha_n\geq 3;
\end{cases}
\qquad \qquad 
\widehat{\beta}_n=\begin{cases}
0, & \beta_n=0;\\
1-\beta_n-\varepsilon, & \beta_n\geq 1.
\end{cases}
$$
Here $\sigma$ is a positive number dependent on $\nu$ and $n$,
$\varepsilon$ is an arbitrary small positive number  and $C$ may
depend on $\nu$, $\alpha$, $\beta$ and $\varepsilon$.
\end{sats}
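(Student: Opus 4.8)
The plan is to construct $\Gamma^{\cal N}$ from the Dirichlet Green function $\Gamma^{\cal D}$ by the method of reflection combined with a corrector. Write $x^*=(x',-x_n)$ for the reflection across $\{x_n=0\}$. Since the coefficients $a^{ij}(t)$ depend only on $t$ (not on $x$), the whole-space fundamental solution $\Gamma(x,y;t,s)$ is invariant under the simultaneous reflection $x\mapsto x^*$, $y\mapsto y^*$, but \emph{not} under reflecting only one variable, because the matrix $A(t)$ need not be diagonal in the $x_n$-direction. Consequently the naive even reflection $\Gamma(x,y;t,s)+\Gamma(x^*,y;t,s)$ does \emph{not} satisfy $D_n=0$ on the boundary; it produces a boundary defect involving the mixed coefficients $a^{in}(t)$, $i<n$. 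So the first step is to set
\[
\Gamma^{\cal N}(x,y;t,s)=\Gamma(x,y;t,s)+\Gamma(x^*,y;t,s)+v(x,y;t,s),
\]
where, for each fixed $(y,s)$, the corrector $v$ solves the Dirichlet problem ${\cal L}_0 v=0$ in $\mathbb R^n_+\times(s,\infty)$, $v=0$ at $t=s$, with Dirichlet data on $x_n=0$ chosen to kill the boundary defect of $D_n$ — more precisely, $v$ is obtained by solving a homogeneous-Dirichlet problem whose source is the normal-derivative mismatch of the reflected part, realized through $\Gamma^{\cal D}$ and its derivatives. In other words, $v={\cal G}^{\cal D}$ applied to an explicit kernel built from $D\Gamma(\cdot^*,y;t,s)$, so that the corrector is expressed via $\Gamma^{\cal D}$ and Corollary \ref{kor1}.

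The second step is the estimate. For the leading two terms, Proposition \ref{Pr1} gives the Gaussian bound with no vanishing factors — this is consistent with (\ref{Ap1a}) because the factors ${\cal R}_x^{\widehat\alpha_n}$, ${\cal R}_y^{\widehat\beta_n}$ are all $\le 1$, so no loss occurs for $\alpha_n=\beta_n=0$; the claimed powers ${\cal R}_x^{2-\alpha_n}$ for $\alpha_n=1,2$ come from the cancellation between $\Gamma(x,\cdot)+\Gamma(x^*,\cdot)$ and the corrector: the combination $\Gamma(x,y;t,s)+\Gamma(x^*,y;t,s)$ has vanishing $D_n$ on the boundary up to the defect term, and differentiating a function that vanishes (to first order) at $x_n=0$ in a tangential direction, or the second normal derivative acting on a function with controlled boundary trace, gains exactly one power of ${\cal R}_x$ per the mechanism already exploited in Proposition \ref{T2}. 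For $v$, one feeds the kernel estimates for $D_x^\alpha D_y^\beta\Gamma^{\cal D}$ and $D_x^\alpha D_y^\beta\partial_s\Gamma^{\cal D}$ from Proposition \ref{T2} and Corollary \ref{kor1} into the representation of $v$ and integrates in the intermediate variables; the Gaussian factors combine by the standard semigroup/Chapman--Kolmogorov convolution $\int (t-\tau)^{-a}(\tau-s)^{-b}\exp(-\sigma|x-z|^2/(t-\tau))\exp(-\sigma|z-y|^2/(\tau-s))\,dz\,d\tau$, which reproduces a single Gaussian in $(x-y)$ with the correct homogeneity, while the ${\cal R}$-powers are tracked through the same elementary one-dimensional integrals used in the appendix lemmas of \cite{KN}. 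The asymmetry between $\widehat\alpha_n$ (which allows up to $2-\alpha_n$, losing only the $\varepsilon$ for $\alpha_n\ge3$) and $\widehat\beta_n$ (which gives only $1-\beta_n-\varepsilon$ for $\beta_n\ge1$ and no gain at $\beta_n=0$) reflects that the Neumann condition is imposed in the $x$-variable only: in $x$ we have a genuine second-order vanishing (Neumann data plus the equation force $D_n^2u$ to have a controlled trace), whereas in $y$ the adjoint problem sees effectively a Dirichlet-type behavior with the usual $\varepsilon$-loss inherited from Proposition \ref{T2}. The estimate (\ref{May34}) for $\partial_s\Gamma^{\cal N}$ then follows, exactly as in Corollary \ref{kor1}, from the backward equation $(\partial_s+a^{ij}(-s)D_{y_i}D_{y_j})\Gamma^{\cal N}=0$, trading $\partial_s$ for two $y$-derivatives and one extra power of $(t-s)^{-1}$.

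The main obstacle I expect is twofold. First, the \emph{existence} and precise boundary behavior of the corrector $v$: one must verify that the reflected part's $D_n$-trace on $x_n=0$ is a function (or distribution) for which the Dirichlet problem (\ref{2.6})–(\ref{2.7}) is solvable with enough regularity, and that $v$ together with the reflected terms actually reconstitutes a \emph{Green function} — i.e. that ${\cal L}_0\Gamma^{\cal N}=\delta$, $D_n\Gamma^{\cal N}|_{x_n=0}=0$, and the representation $u={\cal G}^{\cal N}f$ inverts the oblique-derivative problem. Second, and more delicate, is extracting the full power $2-\alpha_n$ in ${\cal R}_x$ (not merely $1-\alpha_n$): the first power is automatic from the boundary condition, but the second requires using the equation itself on the boundary ($\partial_t\Gamma^{\cal N}-a^{ij}D_iD_j\Gamma^{\cal N}=0$ at $x_n=0$, together with $D_n\Gamma^{\cal N}=0$, forces $a^{nn}D_n^2\Gamma^{\cal N}$ to equal tangential derivatives plus $\partial_t$, which already carry one factor of ${\cal R}_x$), so that a careful bootstrap on $\alpha_n$ is needed — this is precisely the step that in Proposition \ref{T2} produced the $2-\alpha_n-\varepsilon$ behavior for $\Gamma^{\cal D}$, and here it must be redone with the Neumann rather than Dirichlet trace, which is why the exponent $\widehat\alpha_n$ jumps from $0$ at $\alpha_n=0$ to $1$ at $\alpha_n=1$ rather than decreasing.
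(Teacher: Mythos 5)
Your construction is genuinely different from the paper's, and as it stands it has a structural gap. You propose $\Gamma^{\cal N}=\Gamma(x,y)+\Gamma(x^*,y)+v$ and describe the defect of the reflected term as a \emph{boundary} defect of $D_n$ to be removed by a Dirichlet corrector. That misidentifies where the reflection fails: since $x\mapsto x^*$ reverses only $x_n$, the trace of $D_{x_n}\bigl[\Gamma(x,y)+\Gamma(x^*,y)\bigr]$ on $x_n=0$ is in fact exactly zero, but $\Gamma(x^*,y;t,s)$ solves the equation with the mixed coefficients $a^{in}(t)$, $i<n$, negated, so the sum has an \emph{interior} source ${\cal L}_0[\Gamma(x^*,y)]=4\sum_{i<n}a^{in}(t)(D_iD_n\Gamma)(x^*,y)$. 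A corrector solving a homogeneous-\emph{Dirichlet} problem for this source destroys the Neumann trace you just arranged; a corrector solving the Neumann problem is circular. Because $A(t)$ depends on $t$, there is no fixed skew reflection adapted to the operator, which is precisely the obstruction that makes the discontinuous-in-time case hard. Beyond this, the decay exponents $\widehat\alpha_n,\widehat\beta_n$ are asserted by analogy rather than derived; in particular your heuristic of ``second-order vanishing in $x$'' is inconsistent with the statement itself (for $\alpha_n=0$ and for $\alpha_n=2$ the exponent is $0$, i.e.\ there is no vanishing of $\Gamma^{\cal N}$ or of $D_n^2\Gamma^{\cal N}$ at the boundary — only $D_n\Gamma^{\cal N}$ gains a full power of ${\cal R}_x$).

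The paper's route avoids all of this: since $D_nu$ solves the homogeneous Dirichlet problem (\ref{2.6})--(\ref{2.7}) with $f_0=0$, ${\bf f}=(0,\dots,0,f)$, one has $D_nu=-\int D_{y_n}\Gamma^{\cal D}(x,y;t,s)f(y,s)\,dy\,ds$, and integrating in $x_n$ yields the explicit formula $\Gamma^{\cal N}(x,y;t,s)=\int_{x_n}^{\infty}D_{y_n}\Gamma^{\cal D}(x',z_n,y;t,s)\,dz_n$. The identity $D_{x_n}\Gamma^{\cal N}=-D_{y_n}\Gamma^{\cal D}$ then transfers Proposition \ref{T2} and Corollary \ref{kor1} verbatim to every derivative with $\alpha_n\ge1$, which is exactly where the exponents $2-\alpha_n$ ($\alpha_n=1,2$), $3-\alpha_n-\varepsilon$ ($\alpha_n\ge3$) and $\widehat\beta_n$ come from (the extra $D_{y_n}$ shifts $\beta_n$ by one in (\ref{May3})). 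The only remaining work is the case $\alpha_n=0$, handled by splitting into $|x_n-y_n|\le\sqrt{t-s}$ (integrate the bound for $D_{z_n}\Gamma^{\cal N}$ in $z_n$) and $|x_n-y_n|>\sqrt{t-s}$ (treat the normal second derivatives as a right-hand side ${\cal F}$ for the tangential operator ${\cal L}'_0$ and convolve with its $(n-1)$-dimensional Green function $\Gamma'$). Your closing remark that (\ref{May34}) follows from (\ref{Ap1a}) via the backward equation in $(y,s)$ is correct, but it is the only step of the proposal that closes; to repair the rest you would either have to produce a genuinely Neumann-compatible corrector with the stated Gaussian--${\cal R}$ bounds, or adopt the antiderivative representation above.
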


\begin{proof} Let $u$ be a solution of problem {\rm (\ref{Jan1})}, {\rm (\ref{OD1})}. 
Then the derivative $D_nu$ obviously satisfies the Dirichlet problem
(\ref{2.6}), (\ref{2.7}) with $f_0=0$ and ${\bf f}=(0,\ldots,0,f)$.
Therefore, 
$$
D_nu=-\int\limits_{-\infty}^t\int\limits_{\mathbb R^n_+}
D_{y_n}\Gamma^{\cal D}(x,y;t,s)f(y;s)\,dyds,
$$
and we can write solution to problem (\ref{Jan1}), (\ref{OD1}) as
\begin{equation}\label{2.11ag}
u(x;t)=\int\limits_{-\infty}^t\int\limits_{\mathbb R^n_+}\Gamma^{\cal N}(x,y,t,s)f(y,s)\,dyds,
\end{equation}
where
$$
\Gamma^{\cal N}(x,y;t,s)=\int\limits_{x_n}^\infty D_{y_n}
\Gamma^{\cal D}(x',z_n,y;t,s)\,dz_n.
$$

Since $D_{x_n}\Gamma^{\cal N}(x,y;t,s)=-D_{y_n}\Gamma^{\cal D}(x,y;t,s)$, 
we derive from (\ref{May3}) that
\begin{equation}\label{March29b}
|D_{x}^{\alpha}D_{y}^{\beta} D_{x_n}\Gamma^{\cal N}(x,y;t,s)|\le
C\,\frac{{\cal R}_x^{2-\alpha_n-\varepsilon} {\cal
R}_y^{1-\beta_n-\varepsilon}}{(t-s)^{\frac {n+1+|\alpha|+|\beta|}2}}
\,\exp \left(-\frac{\sigma|x-y|^2}{t-s}\right),
\end{equation}
where $2-\alpha_n-\varepsilon$ must be replaced by $1-\alpha_n$ if
$\alpha_n\leq 1$ and $1-\beta_n-\varepsilon$ by $0$ if
$\beta_n=0$. Estimate (\ref{Ap1a}) with $\alpha_n\geq 1$ follows
from (\ref{March29b}). 

In a similar way we derive from (\ref{May33}) that
\begin{eqnarray}\label{May9}
&&|D_{x}^{\alpha} D_{x_n}D_{y}^{\beta}\partial_s\Gamma^{\cal N}(x,y;t,s)|\nonumber\\
&&\le C\,\frac{{\cal R}_x^{2-\alpha_n-\varepsilon}{\cal R}_y^{-1-\beta_n-\varepsilon}}
{(t-s)^{\frac {n+3+|\alpha|+|\beta|}2}} \,\exp \left(-\frac{\sigma|x-y|^2}{t-s}\right),
\end{eqnarray}
where $2-\alpha_n-\varepsilon$ must be replaced by $1-\alpha_n$ if
$\alpha_n\leq 1$. Estimate (\ref{May34}) with $\alpha_n\geq 1$ follows
from (\ref{May9}).\medskip

To estimate derivatives with respect to $x'$ we consider two cases.\medskip

{\it Case 1}: $|x_n-y_n|\le\sqrt{t-s}$. Then (\ref{March29b}) implies
\begin{eqnarray*}
&&|D_{x'}^{\alpha'}D_{y}^{\beta}\Gamma^{\cal N}(x,y;t,s)|\le\int\limits_{x_n}^\infty 
|D_{x'}^{\alpha'}D_{y}^{\beta} D_{z_n}\Gamma^{\cal N}(x',z_n,y;t,s)|\,dz_n\\
&&\le \frac{C\,{\cal R}_y^{\widehat\beta_n}}{(t-s)^{\frac {n+|\alpha'|+|\beta|}2}}
\,\exp \left(-\frac{\sigma|x'-y'|^2}{t-s}\right) \int\limits_{\mathbb R}
\exp \left(-\frac{\sigma|z_n-y_n|^2}{t-s}\right)\,\frac {dz_n}{\sqrt{t-s}}\\
&&\le \frac{C\,{\cal R}_y^{\widehat\beta_n}}{(t-s)^{\frac {n+|\alpha'|+|\beta|}2}}
\,\exp \left(-\frac{\sigma|x-y|^2}{t-s}+\sigma\right)
\end{eqnarray*}
(the last inequality is due to $\frac {|x_n-y_n|^2}{t-s}\le1$), which gives (\ref{Ap1a}) with 
$\alpha_n=0$ in the case 1. In a similar way we derive estimate (\ref{May34}) 
with $\alpha_n=0$ in the case 1 from (\ref{May9}).
\medskip

{\it Case 2}: $|x_n-y_n|>\sqrt{t-s}$. Then we rewrite equation ${\cal L}_0\Gamma^{\cal N}=0$ as
\begin{eqnarray}\label{March29c}
&{\cal L}'_0\Gamma^{\cal N}&\equiv \partial_t\Gamma^{\cal N}-
\sum_{i,j=1}^{n-1}a_{ij}(t)D_{x_i}D_{x_j}\Gamma^{\cal N}
\nonumber\\
&&={\cal F}
\equiv\Big(2\sum_{j=1}^{n-1}a_{jn}D_{x_j}D_{x_n}\Gamma^{\cal N}
+a_{nn}D_{x_n}D_{x_n}\Gamma^{\cal N}\Big )
.
\end{eqnarray}
From (\ref{March29b}) and (\ref{March29c}) it follows that
\begin{equation}\label{March30c}
|D_{x'}^{\alpha'}D_y^\beta {\cal F}(x,y;t,s)|\leq C\,\frac{{\cal R}_y^{\widehat\beta_n}}
{(t-s)^{\frac {n+2+|\alpha'|+|\beta|}2}} \,\exp \left(-\frac{\sigma|x-y|^2}{t-s}\right).
\end{equation}

Let $\Gamma'(x',y';t,s)$ be the Green function of
the operator ${\cal L}'_0$ in $\mathbb R^{n-1}\times\mathbb R$. Then solving
(\ref{March29c}), we get
\begin{equation*}
\Gamma^{\cal N}(x,y;t,s)=\int\limits_{s}^t\int\limits_{\mathbb R^{n-1}}
\Gamma'(x',z';t,\tau){\cal F}(z',x_n,y;\tau,s)\,dz'd\tau.
\end{equation*}
Since $\Gamma'(x',z';t,\tau)$ depends only on the difference $x'-z'$, we obtain
\begin{equation}\label{March29d}
D_{x'}^{\alpha'}\Gamma^{\cal N}(x,y;t,s)=\int\limits_{s}^t\int\limits_{\mathbb R^{n-1}}
\Gamma'(x',z';t,\tau)D_{z'}^{\alpha'}{\cal F}(z',x_n,y;\tau,s)\,dz'd\tau.
\end{equation}

Using Proposition \ref{Pr1} for $\Gamma'$ we get from
(\ref{March29d}) and (\ref{March30c}) 
\begin{eqnarray*}\label{QQ1}
&&|D_{x'}^{\alpha'}D_y^\beta\Gamma^{\cal N}(x,y;t,s)|\leq
\int\limits_{s}^t\int\limits_{\mathbb R^{n-1}}\frac{C}{(t-\tau)^{\frac {n-1}2}}
 \,\exp \left(-\frac{\sigma|x'-z'|^2}{t-\tau}\right)\nonumber\\
&&\times\frac{{\cal R}_y^{\widehat\beta_n}}{(\tau-s)^{\frac {n+2+|\alpha'|+|\beta|}2}} 
\,\exp \left(-\frac{\sigma|(z',x_n)-y|^2}{\tau-s}\right)\,dz'd\tau.
\end{eqnarray*}
We observe that ${\cal R}_y$ here has non-standard time argument: $\tau-s$ instead of
$t-s$. However, since $\widehat\beta_n\le0$, we can estimate ``non-standard''
${\cal R}_y^{\widehat\beta_n}$ by standard one.

Integrating with respect to $z'$, herewith using Fourier transform, we get
\begin{eqnarray*}
&&|D_{x'}^{\alpha'}D_y^\beta\Gamma^{\cal N}(x,y;t,s)|\leq
\frac{C\,{\cal R}_y^{\widehat\beta_n}}{(t-s)^{\frac {n-1}2}}\exp
\left(-\frac{\sigma|x'-y'|^2}{t-s}\right)\nonumber\\
&&\times\int\limits_{s}^t
\frac{1}{(\tau-s)^{\frac {3+|\alpha'|+|\beta|}2}} \,\exp
\left(-\frac{\sigma (x_n-y_n)^2}{\tau-s}\right)\,d\tau.
\end{eqnarray*}
Substituting $\theta=\frac {t-\tau}{\tau-s}$, we arrive at
\begin{eqnarray*}
&&|D_{x'}^{\alpha'}D_y^\beta\Gamma^{\cal N}(x,y;t,s)|\leq
\frac{C\,{\cal R}_y^{\widehat\beta_n}}{(t-s)^{\frac {n+|\alpha'|+|\beta|}2}}
\,\exp \left(-\frac{\sigma|x'-y'|^2}{t-s}\right)\\
&&\times\int\limits_0^\infty (\theta+1)^{\frac {|\alpha'|+|\beta|-1}2} 
\,\exp \left(-\frac{\sigma (x_n-y_n)^2}{t-s}\,(\theta+1)\right)\,d\theta.
\end{eqnarray*}
Since $\frac {|x_n-y_n|^2}{t-s}>1$, this implies
\begin{eqnarray*}
&|D_{x'}^{\alpha'}D_y^\beta\Gamma^{\cal N}(x,y;t,\tau)|&\leq
\frac{C\,{\cal R}_y^{\widehat\beta_n}}{(t-s)^{\frac {n+|\alpha'|+|\beta|}2}}
\,\exp\left(-\frac{\sigma|x-y|^2}{t-s}\right)\\
&&\times\int\limits_0^\infty (\theta+1)^{\frac {|\alpha'|+|\beta|-1}2} 
\,\exp \left(-\sigma\theta\right)\,d\theta,
\end{eqnarray*}
which gives (\ref{Ap1a}) with $\alpha_n=0$ in the case 2.

 In a similar way we derive the estimate (\ref{May34}) 
with $\alpha_n=0$ in the case 2, and the proof is complete.
\end{proof}



\subsection{Coercive estimates in $\widetilde{L}_{p,q}$ and in $L_{p,q}$}\label{Sect2.2}

\begin{sats}\label{Th1s} Let $1<p,q<\infty$ and $\mu\in (-\frac 1p,1-\frac 1p)$.\smallskip

(i) If $f\in\widetilde{L}_{p,q}(\mathbb R^n_+\times\mathbb R)$ then solution {\rm (\ref{2.11ag})} 
to problem {\rm (\ref{Jan1})}, {\rm (\ref{OD1})} satisfies
\begin{equation}\label{TTn1a}
 |\!|\!|x_n^\mu\partial_t u|\!|\!|_{p,q}+|\!|\!|x_n^\mu D(Du)|\!|\!|_{p,q}\le 
C\ |\!|\!|x_n^\mu f|\!|\!|_{p,q}.
\end{equation}

(ii) If $f\in L_{p,q}(\mathbb R^n_+\times\mathbb R)$ then solution {\rm (\ref{2.11ag})} 
to problem {\rm (\ref{Jan1})}, {\rm (\ref{OD1})} satisfies
\begin{equation}\label{est_halfspace}
\|x_n^\mu\partial_t u\|_{p,q}+\|x_n^\mu D(Du)\|_{p,q}\le C\ \|x_n^\mu f\|_{p,q}.
\end{equation}
The constant $C$ 
depends only on $\nu$, $\mu$, $p$ and $q$.
\end{sats}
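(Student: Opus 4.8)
The strategy is to write $u$ via the Green function $\Gamma^{\cal N}$ as in \eqref{2.11ag}, differentiate under the integral sign, and reduce the estimates \eqref{TTn1a}, \eqref{est_halfspace} to the boundedness of a finite list of integral operators whose kernels are controlled by Theorem \ref{Th2}. For the second derivatives $x_n^\mu D_iD_ju$, two of the three indices patterns have $\alpha_n\ge1$ (say $D_{x_n}D_{x_j}u$ or $D_{x_n}D_{x_n}u$) and then $\widehat\alpha_n=2-\alpha_n$, so the kernel $x_n^\mu y_n^{-\mu}D_x^\alpha\Gamma^{\cal N}$ with $|\alpha|=2$ satisfies the hypothesis of Proposition \ref{L_p} with $r=2-\alpha_n$, $\lambda_1=\widehat\alpha_n-r\ge0$, $\lambda_2=0$; for the purely tangential second derivative $D_{x_i}D_{x_j}u$ with $i,j\le n-1$ we use $\widehat\alpha_n=0$, which still fits Proposition \ref{L_p} with $r=0$. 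For the time derivative $x_n^\mu\partial_tu$ one uses the equation \eqref{Jan1}: $\partial_tu=a^{ij}(t)D_iD_ju+f$, so it is controlled by the same second-derivative estimates plus the trivial bound $\|x_n^\mu f\|\le\|x_n^\mu f\|$.

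The harder ingredient is that Proposition \ref{L_p} only gives boundedness in $L_p$ (i.e.\ $p=q$) and in $\widetilde L_{p,\infty}$. To obtain the full $\widetilde L_{p,q}$ estimate \eqref{TTn1a} I would mimic the argument in Theorem \ref{prop11}(i): interpolate the $L_p$-bound with the $\widetilde L_{p,\infty}$-bound to get $\widetilde L_{p,q}$ for $1<p\le q<\infty$, then use a duality argument for $q>p$; here one must check that the formal adjoint kernels are of the same type, which follows since $\Gamma^{\cal N}$ for the backward problem has the same structure (the roles of $x,y$ and the exponents $\widehat\alpha_n,\widehat\beta_n$ in Theorem \ref{Th2} are designed to accommodate this, using $\mu\in(-\frac1p,1-\frac1p)\Leftrightarrow -\mu-1+\frac1p\cdot\!\! $ wait — more precisely the dual exponent $\mu'=1-\frac1p-\mu$ shifted appropriately stays in the admissible range). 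To obtain \eqref{est_halfspace} in the scale $L_{p,q}$ I would copy the scheme of Theorem \ref{prop11}(ii): the $L_p$-boundedness just proved is the first hypothesis of \cite[Theorem 3.8]{BIN}, and the second (cancellation) hypothesis is the analog of Lemma \ref{weak_2}, proved by writing the kernel difference ${\cal K}(x,y;t,s)-{\cal K}(x,y;t,s^0)$, estimating it through $\partial_s\Gamma^{\cal N}$ via \eqref{May34} for $t-s^0>2\delta$ and through \eqref{Ap1a} otherwise, interpolating the two to extract a factor $(\delta/(t-s))^\varkappa$, and applying Proposition \ref{L_p_1}; then \cite[Theorem 3.8]{BIN} gives $L_{p,q}$ for $q<p$ and duality for $q>p$.

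The main obstacle I anticipate is bookkeeping the exponents so that every kernel genuinely falls under Propositions \ref{L_p} and \ref{L_p_1} with admissible $\lambda_1,\lambda_2,\mu$: one needs $\lambda_1+\lambda_2>-1$ and $-\frac1p-\lambda_1<\mu<1-\frac1p+\lambda_2$ for each of the (several) kernels arising from $D_iD_ju$ and, in Lemma \ref{weak_2}'s analog, after the loss of an $\varepsilon$ and a factor ${\cal R}_y$ from the interpolation. The delicate case is the $y$-side: from \eqref{Ap1a} with $|\beta|=0$ we only have $\widehat\beta_n=0$, giving $\lambda_2=0$, so the condition $\mu<1-\frac1p+\lambda_2=1-\frac1p$ is exactly the stated upper bound on $\mu$ — tight but admissible; on the other hand the lower bound $\mu>-\frac1p$ comes from $\lambda_1\ge0$ on the $x$-side, again exactly matching. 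So the range $\mu\in(-\frac1p,1-\frac1p)$ is not improvable by this method, which is consistent with the claim. Once the list of kernels is set up, each verification is routine and the rest is the interpolation/duality/\cite{BIN} machinery already used for Theorem \ref{prop11}.
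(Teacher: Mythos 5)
Your reduction of the second-derivative bounds to absolute-value kernel estimates via Proposition \ref{L_p} does not work, and this is the heart of the matter rather than ``routine bookkeeping''. For $|\alpha|=2$ the kernel $x_n^\mu y_n^{-\mu}D_x^\alpha\Gamma^{\cal N}$ has the on-diagonal singularity $(t-s)^{-\frac{n+2}{2}}\exp(-\sigma|x-y|^2/(t-s))$, which is not absolutely integrable in $(y,s)$; accordingly Proposition \ref{L_p} explicitly requires $r>0$, and no admissible choice of $r$ fits these kernels. Indeed, rewriting the majorant of Proposition \ref{L_p} as $\frac{{\cal R}_x^{\lambda_1}(1-{\cal R}_x)^{r}{\cal R}_y^{\lambda_2}}{(t-s)^{(n+2)/2}}\,\frac{x_n^{\mu}}{y_n^{\mu}}$ shows that for $r>0$ it carries the extra decay factor $(1-{\cal R}_x)^r=\left(\frac{\sqrt{t-s}}{x_n+\sqrt{t-s}}\right)^{r}$, which the true kernels do not possess: your assignment $r=2-\alpha_n$ for $\alpha_n=1$ fails when $x_n\gg\sqrt{t-s}$, and $r=0$ (your tangential case and $\alpha_n=2$) is excluded by the hypothesis $0<r\le2$. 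The operators $f\mapsto x_n^\mu D_iD_ju$ are genuine singular integral operators whose boundedness rests on cancellation, not on pointwise kernel size, so the $L_p$ and $\widetilde L_{p,\infty}$ bounds that your interpolation/duality scheme takes as its starting point are never established.

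The paper avoids this entirely. It first observes that $v=D_nu$ solves the homogeneous Dirichlet problem (\ref{2.6})--(\ref{2.7}) with $f_0=0$, ${\bf f}=(0,\dots,0,f)$, so Theorem \ref{prop11} (whose own gradient estimate comes from localizing Krylov's whole-space coercive estimate plus a partition of unity, not from kernel bounds) controls $D(D_nu)$ in both scales; this covers every second derivative containing at least one $D_n$. For the purely tangential derivatives it rewrites the equation as ${\cal L}'_0u=\widetilde f$ with $\widetilde f=f+2\sum_ja_{jn}D_jD_nu+a_{nn}D_nD_nu$ already controlled, and applies the $(n-1)$-dimensional whole-space estimate of Proposition \ref{space} slice-wise in $x_n$ for the $\widetilde L_{p,q}$ scale; for the $L_{p,q}$ scale it uses the resulting $p=q$ bound as the $L_p$-boundedness input for the kernel ${\cal K}_4$ and only then runs the cancellation lemma, \cite[Theorem 3.8]{BIN} and duality. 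Your second paragraph correctly anticipates that last piece of machinery (and your treatment of $\partial_tu$ via the equation is fine once the second derivatives are bounded), but without the slicing/localization ideas the argument has no valid foundation.
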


\begin{proof}
First, we recall that the function $D_nu$ satisfies the Dirichlet problem (\ref{2.6}), (\ref{2.7}) 
with $f_0=0$ and ${\bf f}=(0,\ldots,0,f)$. Thus, Theorem \ref{prop11} gives
\begin{eqnarray}\label{2.11a}
|\!|\!|x_n^\mu D(D_nu)|\!|\!|_{p,q}\leq C|\!|\!|x_n^\mu f|\!|\!|_{p,q};\\
\|x_n^\mu D(D_nu)\|_{p,q}\leq C\|x_n^\mu f\|_{p,q}.
\label{2.11b}\end{eqnarray}

To estimate the derivatives $D'D'u$ in $\widetilde L_{p,q}$-norm, we proceed similarly to Theorem 
\ref{Th2}. We rewrite equation (\ref{Jan1}) as in (\ref{March29c}):
\begin{equation*}
{\cal L}'_0u=\widetilde{f}\equiv f+2\sum_{j=1}^{n-1}a_{jn}D_jD_nu+a_{nn}D_nD_nu.
\end{equation*}

Using Proposition \ref{space} (ii) in $\mathbb R^{n-1}$ we obtain
\begin{equation}\label{3.2} |\!|\!|D'D'u(\cdot,x_n)|\!|\!|_{p,q}\leq
C|\!|\!|\widetilde{f}(\cdot,x_n)|\!|\!|_{p,q}
\end{equation}
almost for all $x_n>0$. Multiplying both sides of (\ref{3.2}) by
$x_n^\mu$ and taking $L_p$ norm with respect to $x_n$, we arrive at
\begin{equation}\label{3.3}
|\!|\!|x_n^\mu D'D'u|\!|\!|_{p,q}\leq C|\!|\!|x_n^\mu
\widetilde{f}|\!|\!|_{p,q}\leq C|\!|\!|x_n^\mu f|\!|\!|_{p,q},
\end{equation}
where we have used estimate (\ref{2.11a}). The first term in (\ref{TTn1a}) is estimated by using
(\ref{2.11a}), (\ref{3.3}) and equation (\ref{Jan1}), and the statement (i) follows.\medskip

For $L_{p,q}$-norm of $D'D'u$ this approach fails, so we proceed as in the part (ii)
of Theorem \ref{prop11}. Let us introduce the kernels
$$
{\cal K}_4(x,y;t,s)=\frac{x_n^\mu}{y_n^\mu}D'_{x'}D'_{x'}{\Gamma}_{\cal N}(x,y;t,s);\qquad
{\cal K}^*_4(x,y;t,s)={\cal K}_4(y,x;s,t).
$$
Estimate (\ref{3.3}) with $q=p$ means that the operator ${\cal K}_4$ is bounded in
$L_p(\mathbb R^n\times \mathbb R)$. 
Choose $\varepsilon>0$ such that relation (\ref{mueps}) holds. Using
estimates (\ref{Ap1a}) and (\ref{May34}), it is easy to check that
${\cal K}_4$ satisfies the same estimates as the kernel ${\cal K}_3$
in Theorem \ref{prop11}. Verbatim repetition of arguments shows that
this operator is bounded in $L_{p,q}(\mathbb R^n\times \mathbb R)$ for any
$q\in\,(1,p)$. 

Further, by duality the operator ${\cal K}^*_4$ is bounded in 
$L_{p'}(\mathbb R^n\times \mathbb R)$. 
Using (\ref{Ap1a}) and relation 
$(\partial_s-a_{ij}(s)D_{y_i}D_{y_j})\Gamma^{\cal N}(y,x;s,t)=0$ for $s>t$,
we obtain 
$$
|\partial_s{\cal K}^*_4(x,y;t,s)|\le \frac C{(s-t)^{\frac {n+4}2}}\, 
\frac {x_n^{-\mu}}{y_n^{-\mu}}\,
\exp \left(-\frac {\sigma|x-y|^2}{t- s}\right).
$$
For $| s- s^0|<\delta$ and $s^0-t>2\delta$ this implies 
\begin{equation*}
\left|{\cal K}^*_4(x,y;t, s)-{\cal K}^*_4(x,y;t, s^0)\right|
\le \frac {C\delta}
{(t- s)^{\frac {n+2}2+1}}\, \frac {x_n^{-\mu}}{y_n^{-\mu}}\,
\exp \left(-\frac {\sigma|x-y|^2}{t- s}\right).
\end{equation*}
The last estimate allows us to apply Proposition \ref{L_p_1}
with $\varkappa=1$, $r=0$, $\lambda_1=\lambda_2=0$ and $p$ replaced by $p'$.
Therefore, Theorem 3.8 \cite{BIN} ensures that for any
$q\in\,(p,\infty)$ the operator ${\cal K}^*_4$ is bounded in 
$L_{p',q'}(\mathbb R^n\times \mathbb R)$. By duality
 the operator ${\cal K}_4$ is bounded in 
$L_{p,q}(\mathbb R^n\times \mathbb R)$.

Thus, we have
\begin{equation}\label{3.4}
\|x_n^\mu D'D'u\|_{p,q}\leq C\|x_n^\mu f\|_{p,q}
\end{equation}
for all $1<q<\infty$. The first term in (\ref{est_halfspace}) is estimated by (\ref{2.11b}), 
(\ref{3.4}) and equation (\ref{Jan1}), and the statement (ii) also follows.
\end{proof}

\section{Solvability of the oblique derivative problem in a bounded cylinder}\label{solv}

Let $\Omega$ be a bounded domain in $\mathbb R^n$ with boundary
$\partial\Omega$. For a cylinder $Q=\Omega\times(0,T)$, we denote by
$\partial''Q=\partial\varOmega\times(0,T)$ its lateral boundary.

We introduce two scales of functional spaces: ${\mathbb
L}_{p,q,(\mu)}(Q)$ and $\widetilde{\mathbb L}_{p,q,(\mu)}(Q)$, with
norms
\begin{equation*}
{\pmb \|}f{\pmb \|}_{p,q,(\mu),Q}= \Vert (\widehat
d(x))^{\mu}f\Vert_{p,q,Q}= \Big (\int\limits_0^T\Big
(\int\limits_\Omega (\widehat d(x))^{\mu p}|f(x,t)|^pdx\Big
)^{\frac qp}dt\Big )^{\frac 1q}
\end{equation*}
and
\begin{equation*}
\pmb {|\!|\!|}f\pmb {|\!|\!|}_{p,q,(\mu),Q}= |\!|\!| (\widehat
d(x))^{\mu}f|\!|\!|_{p,q,Q}= \Big (\int\limits_\Omega\Big
(\int\limits_0^T (\widehat d(x))^{\mu q}|f(x,t)|^qdt\Big
)^{\frac pq}dx\Big)^{\frac 1p}
\end{equation*}
respectively, where $\widehat d(x)$ stands for the distance from $x
\in \Omega$ to $\partial\Omega$. For $p=q$ these spaces coincide,
and we use the notation ${\mathbb L}_{p,(\mu)}(Q)$ and ${\pmb \|}\cdot{\pmb \|}_{p,(\mu),Q}$.

We denote by ${\mathbb W}^{2,1}_{p,q,(\mu)} (Q)$ and $\widetilde
{\mathbb W}^{2,1}_{p,q,(\mu)} (Q)$ the set of functions with the
finite seminorms
\begin{equation*}
{\pmb \|}\partial_tu {\pmb \|}_{p,q,(\mu),Q}+\sum_{ij}{\pmb \|}
D_iD_ju {\pmb \|} _{p,q, (\mu),Q}
\end{equation*}
and
\begin{equation*}
\pmb{|\!|\!|} \partial_tu \pmb{|\!|\!|}_{p,q, (\mu),Q}+\sum_{ij}
\pmb{|\!|\!|} D_iD_ju \pmb{|\!|\!|}_{p,q, (\mu),Q}
\end{equation*}
respectively. These seminorms become norms on the subspaces defined
by $u|_{t=0}=0$. For $p=q$ we write ${\mathbb W}^{2,1}_{p,(\mu)} (Q)$.\medskip

We say $\partial\Omega \in {\cal W}^2_{p,(\mu)}$ if for any point
$x^0\in\partial\Omega$ there exists a neighborhood $\cal U$  and a
diffeomorphism $\Psi$ mapping ${\cal U}\cap\Omega$ onto the half-ball 
$B_1^+$ and satisfying 
\begin{equation*}
(\widehat d(x))^{\mu}D^2\Psi\in L_p({\cal U}\cap\Omega);
\qquad x_n^\mu D^2\Psi^{-1}\in L_p(B_1^+),
\end{equation*} 
where corresponding norms are uniformly bounded with respect to $x^0$.\smallskip

We set $\widehat\mu(p,q)=1-\frac{n}{p}-\frac{2}{q}$.\medskip


We consider the initial-boundary value problem
\begin{eqnarray}\label{DesS4}
&&{\cal L}u\equiv \partial_tu-a^{ij}(x,t)D_iD_ju+b^i(x,t)D_iu=f(x,t) \quad
{\textup{in}} \quad Q;\\
&&\gamma^i(x,t)D_iu|_{\partial''Q}=0,\qquad\qquad u|_{t=0}=0.\nonumber
\end{eqnarray}
The matrix of leading coefficients $a^{ij}\in {\cal C}(\overline{\Omega}\to L_\infty(0,T))$ 
is symmetric and satisfies the ellipticity condition (\ref{Jan2}). The vector field $\gamma$ is 
assumed non-tangent to $\partial\Omega$:
\begin{equation}\label{nontang}
\gamma^i(x,t){\bf n}_i(x)\ge\gamma_0,\qquad
(x,t)\in\partial''Q, \quad \gamma_0=\const>0
\end{equation}
(here ${\bf n}(x)$ stands for the unit exterior normal vector to $\partial\Omega$ at the 
point $x$).

\begin{sats}\label{anisotropic}
Let $1<p,q<\infty$ and $\mu \in (-\frac{1}{p},1-\frac{1}{p})$. Assume that the components $\gamma^i$ belong to the anisotropic 
H\"older space ${\cal C}^{0,1;\frac 12}(\partial''Q)$. \medskip

{\bf 1}. Let $ b^i \in {\mathbb
L}_{{\overline{p}},{\overline{q}},(\overline{\mu})}(Q)+ {\mathbb
L}_{\infty,(\overline{\overline{\mu}})}(Q)$, where $\displaystyle
{\overline{p}}$ and $\displaystyle {\overline{q}}$ are subject to
\begin{equation*}\overline{p}\ge p;\quad \left[
\begin{array}{ll}
\overline{q}=q;&
\widehat\mu(\overline{p},\overline{q})>0\\
q<\overline{q}<\infty;& \widehat\mu(\overline{p},\overline{q})=0
\end{array}\right.,
\end{equation*}
while $\overline {\mu}$ and $\overline{\overline{\mu}}$ satisfy
\begin{equation}\label{mumu}
{\overline{\mu}}=\min\{\mu, \max\{\widehat\mu(p,q),0\}\};\qquad
\overline{\overline{\mu}}<\mu+\textstyle\frac 1p.
\end{equation}

Suppose also that either $\partial\Omega \in {\cal W}^2_{\infty,(\overline{\overline{\mu}})}$ 
or $\partial\Omega \in {\cal W}^2_{\overline{p},(\overline{\mu})}$. Then, for any 
$f \in {\mathbb L}_{p,q,(\mu)}(Q)$, the initial-boundary value problem
{\rm (\ref{DesS4})} has a unique solution $u\in {\mathbb
W}^{2,1}_{p,q,(\mu)}(Q)$. Moreover, this solution satisfies
\begin{equation*}
{\pmb \|}\partial_tu{\pmb \|}_{p,q,(\mu)}+
\sum_{ij}{\pmb \|}D_iD_ju{\pmb \|}_{p,q,(\mu)} \le C {\pmb
\|}f{\pmb \|}_{p,q,(\mu)},
\end{equation*}
where the positive constant $C$ does not depend on $f$.\medskip

{\bf 2}. Let $ b^i \in \widetilde{\mathbb L}_{{\overline{p}},{\overline{q}},(\overline{\mu})}(Q)+ 
{\mathbb L}_{\infty,(\overline{\overline{\mu}})}(Q)$, where $\displaystyle {\overline{p}}$ and 
$\displaystyle {\overline{q}}$ are subject to
\begin{equation*}\overline{q}\ge q;\quad \left[
\begin{array}{ll}
\overline{p}=p;&
\widehat\mu(\overline{p},\overline{q})>0\\
p<\overline{p}<\infty;& \widehat\mu(\overline{p},\overline{q})=0
\end{array}\right.,
\end{equation*}
while $\overline {\mu}$ and $\overline{\overline{\mu}}$ satisfy
(\ref{mumu}). Suppose also that $\partial\Omega$ satisfies the
same conditions as in the part {\bf 1}. Then, for any $f \in
\widetilde{\mathbb L}_{p,q,(\mu)}(Q)$, the 
problem {\rm (\ref{DesS4})} has a unique solution
$u\in\widetilde{\mathbb W}^{2,1}_{p,q,(\mu)}(Q)$. Moreover, this
solution satisfies
\begin{equation*}
\pmb{|\!|\!|} \partial_tu
\pmb{|\!|\!|}_{p,q,(\mu)}+ \sum_{ij}\pmb{|\!|\!|} D_iD_ju
\pmb{|\!|\!|}_{p,q,(\mu)} \le C \pmb{|\!|\!|} f
\pmb{|\!|\!|}_{p,q,(\mu)},
\end{equation*}
where the positive constant $C$ does not depend on $f$.

\end{sats}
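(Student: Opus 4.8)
The plan is to reduce the bounded-domain problem \eqref{DesS4} to the half-space estimate of Theorem~\ref{Th1s} by a partition of unity and localization, treating the lower-order term $b^iD_iu$ as a perturbation. The argument naturally splits into an \emph{a priori} estimate (which gives uniqueness and the coercive bound) and an existence part (by continuity method), so I would first concentrate on the a priori estimate, which is where essentially all the work lies. Fix a finite cover of $\overline\Omega$ by neighborhoods $\mathcal U_k$: interior balls, where the frozen-coefficient operator is comparable to $\mathcal L_0$ and Proposition~\ref{space} applies after freezing $a^{ij}$ at the center and absorbing the oscillation via continuity of $a^{ij}$ in $x$; and boundary neighborhoods, where the diffeomorphism $\Psi_k$ flattens $\partial\Omega$ and, after a further linear change of the spatial variables turning the oblique field $\gamma$ into $D_n$ (as indicated around \eqref{OD1}), brings \eqref{DesS4} into the model form \eqref{Jan1}, \eqref{OD1} with an error. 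Apply Theorem~\ref{Th1s} in each boundary chart; the crucial point is that $\widehat d(x)$ is comparable to $x_n\circ\Psi_k$ on $\mathcal U_k\cap\Omega$, so the distance-power weight matches the $x_n^\mu$ weight of Theorem~\ref{Th1s}, and the admissible range $\mu\in(-\frac1p,1-\frac1p)$ is exactly the hypothesis.

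The errors produced by this procedure are of three kinds, and each must be shown to be lower-order, i.e.\ absorbable (after multiplying by a small factor, using a covering by small balls, and using interpolation inequalities of the form $\pmb\|(\widehat d)^\mu Du\pmb\|_{p,q}\le\varepsilon\pmb\|(\widehat d)^\mu D^2u\pmb\|_{p,q}+C_\varepsilon\pmb\|(\widehat d)^\mu u\pmb\|_{p,q}$ and analogously for $\widetilde{\pmb{|\!|\!|}}\cdot\widetilde{\pmb{|\!|\!|}}$, together with a Poincar\'e-type control of $u$ itself via the zero initial condition). First, the oscillation of the leading coefficients $a^{ij}(x,t)-a^{ij}(x^0,t)$: controlled by uniform continuity of $a^{ij}$ in $x$ (shrink the charts). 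Second, the geometric error from $D^2\Psi_k$ and $D^2\Psi_k^{-1}$: here enter the hypotheses $\partial\Omega\in\mathcal W^2_{\infty,(\overline{\overline\mu})}$ or $\partial\Omega\in\mathcal W^2_{\overline p,(\overline\mu)}$, which say precisely that these second derivatives lie in a weighted $L_\infty$ or $L_{\overline p}$ space; combined with H\"older/Sobolev embeddings in the weighted anisotropic scale (this is where the exponent $\widehat\mu(p,q)=1-\frac np-\frac2q$ and the conditions \eqref{mumu} on $\overline\mu,\overline{\overline\mu},\overline p,\overline q$ are used) one bounds the geometric terms by $\varepsilon\,(\text{second-order norm})+C_\varepsilon\,(\text{lower order})$. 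Third, the lower-order term $b^iD_iu$: by H\"older's inequality in the mixed norm, $\pmb\|(\widehat d)^\mu b^iD_iu\pmb\|_{p,q}$ is bounded by $\|b\|$ in the space $\mathbb L_{\overline p,\overline q,(\overline\mu)}+\mathbb L_{\infty,(\overline{\overline\mu})}$ times a norm of $Du$ in a space into which $\mathbb W^{2,1}_{p,q,(\mu)}$ embeds; the splitting of $\overline\mu$ in \eqref{mumu} as $\min\{\mu,\max\{\widehat\mu(p,q),0\}\}$ is dictated exactly by which weighted Sobolev embedding is available, and the second piece $\mathbb L_{\infty,(\overline{\overline\mu})}$ with $\overline{\overline\mu}<\mu+\frac1p$ is handled by a Hardy-type inequality absorbing one power of $\widehat d$.

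For existence I would use the method of continuity, connecting $\mathcal L$ to (a localized model of) $\mathcal L_0$ with the oblique condition along a segment $\mathcal L_\tau=(1-\tau)\mathcal L_0^{\mathrm{loc}}+\tau\mathcal L$, $\tau\in[0,1]$; the a priori estimate just proved, uniform in $\tau$, together with solvability at $\tau=0$ (which follows from Theorem~\ref{Th1s} patched by the partition of unity, i.e.\ the same localization run in the constructive direction to build a parametrix, then a Neumann series to remove the remaining error operator, which is small/compact on the weighted scale), yields solvability at $\tau=1$. Uniqueness is immediate from the a priori estimate. Part~\textbf{2}, with the tilde-norms, is proved by the identical scheme, invoking the $\widetilde L_{p,q}$ half of Theorem~\ref{Th1s} and of Propositions~\ref{space}, \ref{L_p}; the only change is bookkeeping of which of $\overline p,\overline q$ is frozen, reflected in the asymmetric conditions on $(\overline p,\overline q)$ in the two parts. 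The main obstacle, and the step deserving the most care, is the treatment of the geometric error terms near the boundary in the borderline weighted range: one must verify that the weighted anisotropic embedding constants are finite precisely under \eqref{mumu} and under the stated smoothness $\partial\Omega\in\mathcal W^2_{\bullet}$, and that, after choosing the charts small, the resulting operator is a genuine small perturbation in $\mathbb W^{2,1}_{p,q,(\mu)}(Q)$ rather than merely a bounded one.
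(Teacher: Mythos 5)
Your overall scheme --- partition of unity, flattening of $\partial\Omega$, coefficient freezing, reduction to the model problems in the whole space and in the half-space via Theorem \ref{Th1s}, and absorption of the lower-order and geometric errors through the weighted embeddings encoded in \eqref{mumu} and the $\mathcal W^2$ hypotheses on the boundary --- is exactly the route the paper takes (it delegates these steps to the standard argument of \cite[Ch.IV, \S9]{LSU}), and your continuity-method treatment of existence is consistent with it.

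There is, however, a genuine gap. You list three sources of error after localization (oscillation of $a^{ij}$, the second derivatives of the flattening maps, and $b^iD_iu$) but omit the fourth, which is the one specific to the oblique derivative problem: the variation of the field $\gamma$ within a chart. A single linear change of variables only normalizes $\gamma$ at the center of the chart, say $\gamma^i(x^0,t^0)=\delta^n_i$; afterwards the boundary condition reads $D_nu|_{x_n=0}=\varphi:=(\delta^n_i-\gamma^i(x,t))D_iu$, which is inhomogeneous, whereas Theorem \ref{Th1s} covers only the homogeneous condition \eqref{OD1}. This error lives on the lateral boundary, so it cannot be absorbed by the interior interpolation inequalities you invoke; one must subtract from $u$ an extension of a function whose normal derivative trace equals $\varphi$, and for that one needs $\varphi$ to lie in the trace space of gradients of $\mathbb W^{2,1}_{p,q,(\mu)}$-functions (with small norm, coming from shrinking the chart so that $\delta^n_i-\gamma^i$ is small). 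This is precisely where the hypothesis $\gamma^i\in\mathcal C^{0,1;\frac12}(\partial''Q)$ enters --- it guarantees that multiplication by $\delta^n_i-\gamma^i$ preserves that anisotropic, half-order-in-time trace space --- and your proposal never uses this hypothesis at all. The paper carries out exactly this subtraction argument (see \eqref{inhom}) and even remarks afterwards that the sharp condition on $\gamma$ is a multiplier condition on the trace space; without some version of this step the reduction to the model problem \eqref{Jan1}, \eqref{OD1} is incomplete.
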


\begin{Rem}\label{Rem1}
 It is well known (see, e.g., \cite{Li}
)
that if $\partial\Omega \in {\cal C}^{1,\delta}$ for some $\delta\in(0,1]$, then 
$\partial\Omega \in {\cal W}^2_{\infty,(1-\delta)}$. 
In this case the second inequality in {\rm (\ref{mumu})} implies solvability 
of the problem {\rm (\ref{DesS4})} for $1-\delta-\frac{1}{p}<\mu<1-\frac{1}{p}$.
\end{Rem}

\begin{proof}
The standard scheme, see \cite[Ch.IV, \S9]{LSU}, including partition
of unity, local rectifying of $\partial\Omega$ and coefficients
freezing, reduces the proof to the coercive estimates for the model
problems to equation (\ref{Jan1}) in the whole space and in the
half-space. These estimates are obtained in \cite[Theorem 1.1]{Kr}
and our Theorem \ref{Th1s}. By the H\"older inequality and the
embedding theorems (see, e.g., \cite[Theorems 10.1 and 10.4]{BIN}),
the assumptions on $b^i$ guarantee that the lower-order terms in
(\ref{DesS4}) belong to desired weighted spaces, ${\mathbb L}_{p,q,(\mu)}(Q)$ 
and $\widetilde{\mathbb L}_{p,q,(\mu)}(Q)$, respectively. 
By the same reasons, the requirements on
$\partial\Omega$ imply $\partial\Omega\in {\cal C}^1$ and ensure the
invariance of assumptions on $b^i$ under rectifying of the boundary.

Next, after rectifying of $\partial\Omega$ we can assume without loss of
generality that $\gamma^i(0)=\delta_i^n$ and rewrite the boundary condition
as follows:
\begin{equation}\label{inhom}
D_nu|_{x_n=0}=\varphi\equiv(\delta_i^n-\gamma^i(x,t))D_iu.
\end{equation}
The inhomogeneity in boundary condition (\ref{inhom}) will be removed if we subtract from $u$
some function satisfying the same boundary condition. By assumption 
$\gamma^i\in{\cal C}^{0,1;\frac 12}(\partial''Q)$, the function $\varphi$ 
has the same differential properties as $Du$. Therefore, such a subtraction leaves the space
${\mathbb L}_{p,q,(\mu)}(Q)$ (respectively, $\widetilde{\mathbb L}_{p,q,(\mu)}(Q)$) of the right-hand side in (\ref{DesS4}). 
This completes the proof.
\end{proof}

The assumption $\gamma^i\in{\cal C}^{0,1;\frac 12}(\partial''Q)$ is not optimal. The sharp assumption here
is that multiplication by the vector field $\gamma$ should keep the space of traces of gradients of functions 
from ${\mathbb W}^{2,1}_{p,q,(\mu)}(Q)$ (respectively, from $\widetilde{\mathbb W}^{2,1}_{p,q,(\mu)}(Q)$).
In other words, $\gamma$ should belong to space ${\bf MT}D{\mathbb W}^{2,1}_{p,q,(\mu)}(Q)$
(respectively, ${\bf MT}D\widetilde{\mathbb W}^{2,1}_{p,q,(\mu)}(Q)$)
of multipliers of traces of gradients of weighted Sobolev functions.
 
Unfortunately, to the best of our knowledge, these spaces are not described yet. In the isotropic case $p=q$
we can give rather sharp sufficient conditions in terms of the Besov spaces
(the notation of the Besov spaces corresponds to \cite[Ch.IV]{BIN}). The following result can be extracted from 
the proofs of \cite[Theorems 18.13 and 18.14]{BIN}, \cite{Usp} and \cite[4.4.3]{MSh}.


\begin{sats}\label{isotropic}
Let $1<p<\infty$ and $\mu \in
(-\frac{1}{p},1-\frac{1}{p})$.
\medskip

Suppose that $ b^i \in {\mathbb L}_{{\overline{p}},(\overline{\mu})}(Q)+ 
{\mathbb L}_{\infty,(\overline{\overline{\mu}})}(Q)$, 
where $\overline{p}$, $\overline {\mu}$ and $\overline{\overline{\mu}}$
 are subject to
\begin{equation*}
\begin{array}{ll}
\overline{p}=\max \{p, n+2\}, \quad\mbox{if}\quad p\ne n+2;\quad&
\overline{p}>n+2,\quad\mbox{if}\quad p=n+2;\\
\\
\overline{\mu}=\min\{\mu, \max\{1-\frac {n+2}{p},0\}\};&
\overline{\overline{\mu}}<\mu+\textstyle\frac 1p.
\end{array}
\end{equation*}
Suppose also that either $\partial\Omega \in {\cal W}^2_{\infty,(\overline{\overline{\mu}})}$ 
or $\partial\Omega \in {\cal W}^2_{\overline{p},(\overline{\mu})}$. 

Finally, we assume that the components $\gamma^i$ belong to the Besov space 
$B^{\mbox{\scriptsize\boldmath$\lambda$}}_{\overline{\overline{p}},\theta}(\partial''Q)$
with parameters
$$\gathered
\mbox{\boldmath$\lambda$}\equiv(\lambda_x^1,\dots,\lambda_x^{n-1},\lambda_t)=
\Big(1-\frac 1p,\dots,1-\frac 1p,\frac 12-\frac 1{2p}\Big);\qquad \theta=p;\\
\overline{\overline{p}}=\max \Big\{p, \frac {n+1}{1-\mu-\frac 1p}\Big\}, 
\quad\mbox{if}\quad p\ne \frac {n+2}{1-\mu};\quad
\overline{\overline{p}}>\frac {n+2}{1-\mu},\quad\mbox{if}\quad p=\frac {n+2}{1-\mu}.
\endgathered
$$
Then, for any $f \in{\mathbb L}_{p,(\mu)}(Q)$, the initial-boundary value problem
{\rm (\ref{DesS4})} has a unique solution $u\in {\mathbb W}^{2,1}_{p,(\mu)}(Q)$. 
Moreover, this solution satisfies
\begin{equation*}
{\pmb \|}\partial_tu{\pmb \|}_{p,(\mu)}+
\sum_{ij}{\pmb \|}D_iD_ju{\pmb \|}_{p,(\mu)} \le C {\pmb\|}f{\pmb \|}_{p,(\mu)},
\end{equation*}
where the positive constant $C$ does not depend on $f$.

\end{sats}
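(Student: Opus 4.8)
The plan is to follow the same reduction scheme as in Theorem \ref{anisotropic}: partition of unity, local flattening of $\partial\Omega$, and freezing of the leading coefficients reduce matters to the model problem for equation (\ref{Jan1}) in the half-space, for which the coercive estimate is supplied by Theorem \ref{Th1s} (in the isotropic case $p=q$ both statements of that theorem coincide). The contributions of the lower-order terms $b^iD_iu$ are absorbed exactly as before: by the H\"older inequality together with the parabolic embedding theorems \cite[Theorems 10.1 and 10.4]{BIN}, the hypotheses on $\overline p$, $\overline\mu$, $\overline{\overline\mu}$ guarantee that $b^iD_iu$ lies in ${\mathbb L}_{p,(\mu)}(Q)$ whenever $u\in{\mathbb W}^{2,1}_{p,(\mu)}(Q)$, and the hypotheses on $\partial\Omega$ ensure $\partial\Omega\in{\cal C}^1$ and the invariance of these conditions under rectification of the boundary. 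Thus the only genuinely new point, compared with Theorem \ref{anisotropic}, is the treatment of the oblique boundary condition under the weaker assumption $\gamma^i\in B^{\text{\scriptsize\boldmath$\lambda$}}_{\overline{\overline p},\theta}(\partial''Q)$ in place of $\gamma^i\in{\cal C}^{0,1;\frac 12}$.

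After flattening we may assume $\gamma^i(0)=\delta_i^n$ and rewrite the boundary condition in the inhomogeneous form (\ref{inhom}), $D_nu|_{x_n=0}=\varphi\equiv(\delta_i^n-\gamma^i(x,t))D_iu$. To remove this inhomogeneity I would subtract from $u$ a lifting of $\varphi$, i.e.\ a function whose normal derivative on $x_n=0$ equals $\varphi$ and which stays in ${\mathbb W}^{2,1}_{p,(\mu)}(Q)$; what must be verified is that $\varphi$ belongs to the trace space of gradients of functions from ${\mathbb W}^{2,1}_{p,(\mu)}(Q)$, equivalently that multiplication by $\delta_i^n-\gamma^i$ maps that trace space into itself. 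The trace space in question is a weighted anisotropic Besov space on $\partial''Q$ of smoothness $\bigl(1-\tfrac1p-\mu,\dots,1-\tfrac1p-\mu,\tfrac12(1-\tfrac1p-\mu)\bigr)$ — this is the content of the trace theorems of \cite[Ch.IV]{BIN} for the spaces ${\mathbb W}^{2,1}_{p,(\mu)}$, and it is why the exponents in the definition of $\text{\boldmath$\lambda$}$ and of $\overline{\overline p}$ appear as they do. The required multiplier property then follows from the description of pointwise multipliers in Besov spaces: one invokes \cite[4.4.3]{MSh} (and the companion computations in \cite{Usp} and in the proofs of \cite[Theorems 18.13 and 18.14]{BIN}), which show that a function in $B^{\text{\scriptsize\boldmath$\lambda$}}_{\overline{\overline p},\theta}$ with the stated $\text{\boldmath$\lambda$}$, $\theta=p$, and $\overline{\overline p}$ chosen according to the Sobolev threshold $\frac{n+2}{1-\mu}$ is a bounded multiplier of the relevant trace space. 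Note that at $x=0$ the coefficient $\delta_i^n-\gamma^i$ vanishes, so the multiplier need only control the non-constant part, which is precisely what the Besov regularity delivers.

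Once $\varphi$ has been lifted away, the right-hand side of (\ref{DesS4}) is modified only by terms that remain in ${\mathbb L}_{p,(\mu)}(Q)$, and the homogeneous Dirichlet–to–oblique model estimate of Theorem \ref{Th1s} applies; reassembling via the partition of unity, and treating the lower-order commutator terms produced by the cut-offs as in \cite[Ch.IV, \S9]{LSU}, yields the coercive estimate and, by the usual continuity-method / a priori estimate argument, existence and uniqueness of $u\in{\mathbb W}^{2,1}_{p,(\mu)}(Q)$. The main obstacle is the multiplier step: one must identify the exact trace space of $D{\mathbb W}^{2,1}_{p,(\mu)}(Q)$ as a weighted anisotropic Besov space and then pin down the minimal Besov regularity of $\gamma$ that guarantees boundedness of multiplication on it — in particular getting the Sobolev-type exponent $\overline{\overline p}=\max\{p,\frac{n+1}{1-\mu-1/p}\}$ and the borderline case $p=\frac{n+2}{1-\mu}$ right. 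This is exactly the point where the isotropy $p=q$ is used, since the anisotropic multiplier spaces ${\bf MT}D{\mathbb W}^{2,1}_{p,q,(\mu)}$ are not available in the literature.
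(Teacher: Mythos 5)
Your proposal matches the paper's intended argument: the paper itself gives no written proof of this theorem, stating only that it follows the scheme of Theorem \ref{anisotropic} with the key new ingredient --- the multiplier property of $\gamma^i$ on the (weighted anisotropic Besov) trace space of $D{\mathbb W}^{2,1}_{p,(\mu)}(Q)$ --- extracted from \cite[Theorems 18.13 and 18.14]{BIN}, \cite{Usp} and \cite[4.4.3]{MSh}, which is exactly the reduction and the multiplier step you describe. Your identification of the trace-space smoothness and of the role of the exponents $\overline{\overline p}$ and $\mbox{\boldmath$\lambda$}$ is consistent with the statement, so the proposal is correct and essentially coincides with the paper's route.
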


\bigskip

V.~K. was supported by the Swedish Research Council (VR). 
A.~N. was supported by RFBR grant 12-01-00439 and by St{.} Petersburg University grant. 
He also acknowledges the Link\"oping University for the financial support of his visit 
in February 2012.

\vspace{\baselineskip}

\end{document}